\newcommand{\R}{\Bbb{R}}
\newcommand{\N}{\Bbb{N}}
\newcommand{\Z}{\Bbb{Z}}
\newcommand{\s}{\Bbb{S}}
\newtheorem{teor}{Theorem}[section]
\newtheorem{propo}{Proposition}[section]
\newtheorem{cor}{Corollary}[section]
\newcommand{\n}{\noindent}
\begin{document}

\title{Minimization to the Zhang's energy on {\Large $BV(\Omega)$} and\\ sharp affine Poincaré-Sobolev inequalities
%\footnote{2000 Mathematics Subject Classification: 35J65; 49K20; 35J40}
\footnote{Key words: Affine energy, affine Sobolev inequality, compactness of affine immersion, constrained minimization}
}

\author{\textbf{Edir Junior Ferreira Leite \footnote{\textit{E-mail addresses}:
edirjrleite@ufv.br (E.J.F. Leite)}}\\ {\small\it Departamento de Matem\'{a}tica,
Universidade Federal de Vi\c{c}osa,}\\ {\small\it CCE, 36570-900, Vi\c{c}osa, MG, Brazil}\\
\textbf{Marcos Montenegro \footnote{\textit{E-mail addresses}:
montene@mat.ufmg.br (M.
Montenegro)}}\\ {\small\it Departamento de Matem\'{a}tica,
Universidade Federal de Minas Gerais,}\\ {\small\it Caixa Postal
702, 30123-970, Belo Horizonte, MG, Brazil}}

\date{}{%{\it Preprint - December 12, 2009}}

\maketitle

\markboth{abstract}{abstract}
\addcontentsline{toc}{chapter}{abstract}

\hrule \vspace{0,2cm}

\n {\bf Abstract}

We prove the existence of minimizers for some constrained variational problems on $BV(\Omega)$, under subcritical and critical restrictions, involving the affine energy introduced by Zhang in \cite{Z}. Related functionals have non-coercive geometry and properties like lower semicontinuity and affine compactness are deeper in the weak* topology. As a by-product of our developments, extremal functions are shown to exist for various affine Poincaré-Sobolev type inequalities.

\vspace{0.5cm}
\hrule\vspace{0.2cm}

\section{An overview and the first statements}

Variational problems have been quite studied in the space of functions of bounded variation $BV(\Omega)$, mostly in connection with existence of solutions for equations in the presence of the $1$-Laplace operator, as for instance in the famous Cheeger's problem \cite{Ch}. Contributions along this line of research are given in the works \cite{BW, BS, DM, D, KS, LL, LS}, among others. Part of them focus more specifically on the problem of minimizing the functional
\[
\Phi(u) = |Du|(\Omega) + \int_\Omega a |u|\, dx + \int_{\partial \Omega} b |\tilde{u}|\, d{\cal H}^{n-1}
\]
on the entire $BV(\Omega)$ space or constrained to some subset of it, where $\Omega$ denotes a bounded open in $\R^n$ with Lipschitz boundary, $n \geq 2$, $a \in L^\infty(\Omega)$ and $b \in L^\infty(\partial \Omega)$. Here, $|Du|(\Omega)$, $\tilde{u}$ and ${\cal H}^{n-1}$ stand respectively for the total variation measure of $u$ on $\Omega$, the trace of $u$ on $\partial \Omega$ and the $(n-1)$-dimensional Hausdorff measure on $\partial \Omega$.

Two subsets of $BV(\Omega)$ typically considered are:
\begin{eqnarray*}
&& X = \{ u \in BV(\Omega):\ \int_\Omega |u|^q\, dx = 1\},\\
&& Y = \{ u \in BV(\Omega):\ u \in X,\ \int_\Omega |u|^{r-1} u\, dx = 0\}
\end{eqnarray*}
for exponents $1 \leq q, r \leq \frac{n}{n-1}$, and the corresponding minimization problem consists in establishing the existence of minimizers for the least energy levels:
\[
c = \inf_{u \in X} \Phi(u)\ \ \text{and}\ \ d = \inf_{u \in Y} \Phi(u).
\]
Some subcritical cases have been treated in \cite{KS}, while a few critical ones have been object of study for example in \cite{BW, BS, C1, C2, D, LS}.

The minimization of $\Phi$ in the sets $X$ and $Y$ (with $r = 1$) is also motivated by the existence of extremal functions (i.e. nonzero functions that attain equality) for some classical functional inequalities such as Poincaré, Poincaré-Wirtinger, $L^q$ Poincaré-Sobolev and $L^q$ Poincaré-Wirtinger-Sobolev inequalities for $1 \leq q \leq \frac{n}{n-1}$. More specifically, their respective sharp versions on $BV(\Omega)$ state that
\begin{itemize}
\item[$\blacktriangleright$] Poincaré inequality (${\cal P}$):\\ There exists an optimal constant $\lambda_1 > 0$ such that $\lambda_1\, \Vert u \Vert_{L^1(\Omega)} \leq |Du|(\Omega) + \Vert \tilde{u} \Vert_{L^1(\partial\Omega)}$;

\item[$\blacktriangleright$] Poincaré-Wirtinger inequality (${\cal PW}$):\\ There exists an optimal constant $\mu_1 > 0$ such that $\mu_1\, \Vert u - u_\Omega \Vert_{L^1(\Omega)} \leq |Du|(\Omega)$;

\item[$\blacktriangleright$] Poincaré-Sobolev inequality (${\cal PS}$):\\ There exists an optimal constant $\lambda_q > 0$ such that $\lambda_q\, \Vert u \Vert_{L^q(\Omega)} \leq |Du|(\Omega) + \Vert \tilde{u} \Vert_{L^1(\partial\Omega)}$;

\item[$\blacktriangleright$] Poincaré-Wirtinger-Sobolev inequality (${\cal PWS}$):\\ There exists an optimal constant $\mu_q > 0$ such that $\mu_q\, \Vert u - u_\Omega \Vert_{L^q(\Omega)} \leq |Du|(\Omega)$,
\end{itemize}
where $u_\Omega$ denotes the average of $u$ over $\Omega$.

Some results on existence of extremal functions are well known. For instance, for (${\cal P}$) we refer to \cite{CC, Ch}, for (${\cal PW}$) to \cite{ABM, BS}, for (${\cal PS}$) to \cite{D} and for (${\cal PWS}$) to \cite{ABM, BS, C1, C2}. See also \cite{AGN, MZ, OOR} for refinements in different directions.

Motivated by the existence problem of extremal functions for sharp affine counterparts, some of which weaker than the above inequalities, we develop a theory of minimization for functionals where the term $|Du|(\Omega)$ gives place to the Zhang's affine energy.

In the seminal paper \cite{Z}, Zhang introduced the affine $L^1$ energy (or functional) for functions $u \in W^{1,1}(\R^n)$ given by

\[
{\cal E}_{\R^n}(u) = \alpha_n \left( \int_{\s_{n-1}} \left( \int_{\R^n} | \nabla_\xi u(x) | dx \right)^{-n} d\xi\right)^{-\frac 1n},
\]
where $\alpha_n = (2 \omega_{n-1})^{-1} (n \omega_n)^{1 + 1/n}$. Here, $\nabla_\xi u(x) = \nabla u(x) \cdot \xi$ and $\omega_k$ denotes the volume of the unit ball in $\R^k$. The ``affine" term comes from the invariance property ${\cal E}_{\R^n}(u \circ T) = {\cal E}_{\R^n}(u)$ for every $T \in SL(n)$, where $SL(n)$ denotes the special linear group of $n \times n$ matrices with determinant equal to $1$.

The main result of \cite{Z} ensures that the sharp Sobolev-Zhang inequality

\begin{equation} \label{ASob}
 n \omega_n^{1/n}\, \Vert u \Vert_{L^{\frac{n}{n-1}}(\R^n)} \leq {\cal E}_{\R^n}(u)
\end{equation}
holds for all $u \in W^{1,1}(\R^n)$, with equality attained at characteristic functions of ellipsoids, that is, images of balls under invertible $n \times n$ matrices. Actually, characteristic functions are not in $W^{1,1}(\R^n)$, but rather belong to $BV(\R^n)$.

The Sobolev-Zhang inequality \eqref{ASob} is weaker than the classical sharp $L^1$ Sobolev inequality
\begin{equation} \label{Sob}
 n \omega_n^{1/n}\, \Vert u \Vert_{L^{\frac{n}{n-1}}(\R^n)} \leq \Vert \nabla u \Vert_{L^1(\R^n)},
\end{equation}
since
\begin{equation} \label{comp}
{\cal E}_{\R^n}(u) \leq \Vert \nabla u \Vert_{L^1(\R^n)}
\end{equation}
(see page 194 of \cite{Z}) and also \eqref{comp} is strict for characteristics of ellipsoids other than balls. Zhang also pointed out that the geometric inequality behind \eqref{ASob} is the Petty projection inequality (e.g. \cite{Gardner, S, T}), whereas the geometric inequality behind \eqref{Sob} is the classical isoperimetric inequality.

Later, Wang \cite{Wa} showed that, like in the Sobolev case (e.g. \cite{EG}), the Sobolev-Zhang inequality extends to functions $u \in BV(\R^n)$, where the affine $BV$ energy is expressed naturally by
\[
{\cal E}_{\R^n}(u) = \alpha_n \left( \int_{\s_{n-1}} \left( \int_{\R^n} | \sigma_u(x) \cdot \xi |\, d(|Du|)(x) \right)^{-n} d\xi\right)^{-\frac 1n},
\]
where $\sigma_u: \Omega \rightarrow \R^n$ represents the Radon-Nikodym derivative of $Du$ with respect to its total variation $|Du|$ on $\Omega$, which satisfies $|\sigma_u| = 1$  almost everywhere in $\Omega$ (w.r.t. $|Du|$). Moreover, equality in \eqref{ASob} is achieved precisely by multiples of characteristic functions of ellipsoids, and it also remains weaker than the classical prototype once \eqref{comp} translates to
\begin{equation} \label{comp1}
{\cal E}_{\R^n}(u) \leq |Du|(\R^n).
\end{equation}

Since the Zhang's pioneer work, various improvements and new affine functional inequalities have emerged in a very comprehensive literature. Most of the contributions can be found in the long, but far from complete, list of references \cite{CLYZ, NHJM, HS, HJM1, HJM3, HJM4, HJM5, HJS, KS1, LXZ, LYZ1, LYZ2, LYZ3, Lv, N, N1, N2, Wa, Z}.

Given a function $u \in BV(\Omega)$, denote by $\bar{u}$ its zero extension outside of $\Omega$. The Lipschitz regularity of $\partial \Omega$ guarantees that $\bar{u} \in BV(\R^n)$,

\begin{equation} \label{ebv}
|D \bar{u}|(\R^n) = |Du|(\Omega) + \Vert \tilde{u} \Vert_{L^1(\partial\Omega)}
\end{equation}
and $d(D \bar{u}) = \tilde{u}\, \nu\, d{\cal H}^{n-1}\ {\cal H}^{n-1}$-almost everywhere on $\partial \Omega$, where $\nu$ denotes the unit outward normal to $\partial \Omega$ (see e.g. page 38 of \cite{G}). On the other hand, the latter relation implies that

\[
{\cal E}_{\R^n}(\bar{u}) = \alpha_n \left( \int_{\s_{n-1}} \left( \int_\Omega | \sigma_u(x) \cdot \xi |\, d(|Du|)(x) + \int_{\partial \Omega} |\tilde{u}(x)|\, |\nu(x) \cdot \xi|\, d{\cal H}^{n-1}(x) \right)^{-n} d\xi\right)^{-\frac 1n},
\]
This formula along with \eqref{comp1}, \eqref{ebv} and a reverse Minkowski inequality yield the following comparisons between the affine $BV$ energy of zero extended functions and some local terms:

\begin{itemize}
\item[(C1)] ${\cal E}_{\R^n}(\bar{u}) \leq |Du|(\Omega) + \Vert \tilde{u} \Vert_{L^1(\partial\Omega)}$ for all $u \in BV(\Omega)$;
\item[(C2)] ${\cal E}_{\R^n}(\bar{u}) = {\cal E}_\Omega(u)$ for all $u \in BV_0(\Omega)$;
\item[(C3)] ${\cal E}_{\R^n}(\bar{u}) \geq {\cal E}_\Omega(u) + {\cal E}_{\partial \Omega}(\tilde{u})$ for all $u \in BV(\Omega)$ with $\tilde{u} \neq 0$ on $\partial \Omega$ (a.e.) or no restriction provided that $\partial \Omega$ is non-flat in the sense that $\nu(x) \cdot \xi \neq 0$ on $\partial \Omega$ (a.e.) for every $\xi \in \s^{n-1}$,
\end{itemize}
where $BV_0(\Omega)$ denotes the subspace of $BV(\Omega)$ of functions with zero trace on $\partial \Omega$,

\[
{\cal E}_{\Omega}(u) = \alpha_n \left( \int_{\s_{n-1}} \left( \int_{\Omega} | \sigma_u(x) \cdot \xi |\, d(|Du|)(x) \right)^{-n} d\xi\right)^{-\frac 1n}
\]
and

\[
{\cal E}_{\partial \Omega}(\tilde{u}) = \alpha_n \left( \int_{\s_{n-1}} \left( \int_{\partial \Omega} |\tilde{u}(x)|\, |\nu(x) \cdot \xi|\, d{\cal H}^{n-1}(x) \right)^{-n} d\xi\right)^{-\frac 1n}.
\]
Unlike (C1) and (C2), the comparison (C3) is not straightforward (Corollary \ref{C1}). The required geometric condition is clearly satisfied for many domains which include balls. The above expressions are affine invariants in the sense that ${\cal E}_{\Omega}(u \circ T) = {\cal E}_{T(\Omega)}(u)$ and ${\cal E}_{\partial \Omega}(\tilde{u} \circ T) = {\cal E}_{\partial T(\Omega)}(\tilde{u})$ for every $T \in SL(n)$.

It is worth observing from (C1) that the term ${\cal E}_{\R^n}(\bar{u})$ weakens the right-hand side of (${\cal P}$) and (${\cal PS}$) and this fact encourages us to investigate the new functional $\Phi_{\cal A} : BV(\Omega) \rightarrow \R$,
\[
\Phi_{\cal A}(u) = {\cal E}_{\R^n}(\bar{u}) + \int_{\Omega} a |u|\, dx + \int_{\partial \Omega} b |\tilde{u}|\, d{\cal H}^{n-1}.
\]
Evoking the trace embedding and \eqref{comp1}, one sees that $\Phi_{\cal A}$ is well defined for bounded weights $a$ and $b$.

Consider the least energy levels of $\Phi_{\cal A}$ on $X$ and $Y$:
\[
c_{\cal A} = \inf_{u \in X} \Phi_{\cal A}(u)\ \ \text{and}\ \ d_{\cal A} = \inf_{u \in Y} \Phi_{\cal A}(u).
\]
Note that, by \eqref{comp1}, it may happen that $c_{\cal A} = - \infty$ or $d_{\cal A} = - \infty$ depending on the function $b$. However, both levels are finite if one assumes for instance that $b$ is nonnegative.

Minimization problems for $\Phi_{\cal A}$ are affine variants of those similar for $\Phi$, as can be seen by replacing the term ${\cal E}_{\R^n}(\bar{u})$ by $|D \bar{u}|(\R^n)$ and using \eqref{ebv}. On the other hand, inherent to the nature of our variational problems, some intricate points arise in the search for minimizers. We gather the main ones below:
\begin{itemize}
\item[(P1)] The geometry of $\Phi_{\cal A}$ is non-coercive on $BV(\Omega)$;
\item[(P2)] The functional $u \mapsto {\cal E}_{\R^n}(\bar{u})$ is not convex on $BV(\Omega)$;
\item[(P3)] Sequences in $BV(\Omega)$ with bounded affine $BV$ energy can have unbounded total variation;
\item[(P4)] The continuous immersion $BV(\Omega) \hookrightarrow L^{\frac{n}{n-1}}(\Omega)$ is no longer compact.
\end{itemize}
For the points (P1) and (P3) we refer to the example on page 17 of \cite{HJM5}. Already (P2) can indirectly be checked by combining Theorem \ref{T7} and Proposition \ref{P5}.

The absence of an adequate variational structure leads us to concentrate attention on three central ingredients to be discussed respectively in Sections 3, 4 and 5:
\begin{itemize}
\item[(I1)] The functional $u \mapsto {\cal E}_{\R^n}(\bar{u})$ is weakly* lower semicontinuous on $BV(\Omega)$;
\item[(I2)] The set $B_{\mathcal A}(\Omega) = \{u \in BV(\Omega):\ \Vert u \Vert_{L^1(\Omega)} + {\cal E}_{\R^n}(\bar{u}) \leq 1\}$ is compact in $L^q(\Omega)$ for any $1 \leq q < \frac{n}{n-1}$;
\item[(I3)] Minimizing sequences to $\Phi_{\cal A}$ in $X$ are compact in $L^{\frac{n}{n-1}}(\Omega)$, provided that $c_{\cal A} < n \omega_n^{1/n}$. The same conclusion holds true for $Y$ when one assumes $d_{\cal A} < n \omega_n^{1/n}$.
\end{itemize}

Before we go any further, we have a few comments on each above assertion.

Using the solution of the famous $L^1$-Minkowski problem (see e.g. \cite{LYZ3}), Ludwig \cite{L} was able to show that $u \mapsto {\cal E}_{\R^n}(u)$ is strongly continuous on $W^{1,1}(\R^n)$. After regarding the $BV$-Minkowski problem, Wang \cite{Wa} established the continuity on $BV(\R^n)$ with respect to the strict topology. In Section 3, we prove the strong lower semicontinuity on $L^1_{loc}(\R^n)$ for functions with controlled total variation (Theorem \ref{T8}). The argument bases mainly on a characterization of vanishing of ${\cal E}_{\R^n}(u)$ (Theorem \ref{T7}) and the theory of relaxation and weak semicontinuity for integral functionals by Goffman and Serrin \cite{GS}. As a by-product, $u \mapsto {\cal E}_{\R^n}(\bar{u})$ is strictly continuous and weakly* lower semicontinuous on $BV(\Omega)$, which includes (I1) (Corollary \ref{C2}). The affine compactness stated in (I2) (Theorem \ref{T9}), according to (P3), doesn't follow readily from the Rellich-Kondrachov theorem for $BV$ functions. Its proof makes use of Proposition \ref{P3} and a result due to Huang and Li \cite{HL} which gives a boundedness of the total variation of a function in terms of its affine $BV$ energy unless a suitable affine transformation. Lastly, the assertions (I1) and (I2) along with Proposition \ref{P5} play an essential role in the study of (I3).

%The set $B_{\mathcal A}(\Omega)$ is sometimes referred to as ``affine ball" of $BV(\Omega)$, since $u \circ T \in B_{\mathcal A}(\R^n)$ for every $u \in B_{\mathcal A}(\R^n)$ and $T \in SL(n)$, where $SL(n)$ stands for the special linear group of $n \times n$ matrices with determinant equal to $1$.

Throughout the work, we assume the general assumptions already mentioned:
\begin{itemize}
\item[(H)] $a \in L^\infty(\Omega)$, $b \in L^\infty(\partial \Omega)$ and $b \geq 0$ on $\partial \Omega$.
\end{itemize}

The first result considers subcritical affine minimization problems.

\begin{teor} \label{T1}
The levels $c_{\cal A}$ and $d_{\cal A}$ are attained for any $1 \leq q, r < \frac{n}{n-1}$.
\end{teor}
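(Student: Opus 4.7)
The plan is to run the direct method of the calculus of variations, using (I2) for compactness and (I1) for lower semicontinuity. Fix a minimizing sequence $\{u_k\}$ for $c_{\cal A}$ in $X$; the argument for $d_{\cal A}$ on $Y$ is parallel. Since $q \geq 1$ and $\Omega$ is bounded, H\"older gives $\|u_k\|_{L^1(\Omega)} \leq |\Omega|^{1-1/q}$, so by (H) and $b \geq 0$,
\begin{equation*}
{\cal E}_{\R^n}(\bar u_k) \leq \Phi_{\cal A}(u_k) + \|a\|_\infty \|u_k\|_{L^1(\Omega)},
\end{equation*}
and hence $\|u_k\|_{L^1(\Omega)} + {\cal E}_{\R^n}(\bar u_k)$ is uniformly bounded. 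After rescaling, the sequence lies in a fixed dilate of $B_{\cal A}(\Omega)$, and the affine compactness (I2) produces a subsequence (still denoted $u_k$) with $u_k \to u$ strongly in $L^{q'}(\Omega)$ for every $1 \leq q' < \frac{n}{n-1}$.

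Since $q, r$ both lie in this subcritical range, passing to the limit in $\int_\Omega |u_k|^q\, dx = 1$ gives $u \in X$, and in the $Y$-case additionally $\int_\Omega |u|^{r-1} u\, dx = \lim_k \int_\Omega |u_k|^{r-1} u_k\, dx = 0$, placing $u \in Y$. For lower semicontinuity of $\Phi_{\cal A}$, dominated convergence first yields $\int_\Omega a|u_k|\, dx \to \int_\Omega a|u|\, dx$. Since $\bar u_k \to \bar u$ in $L^1(\R^n)$, the strong $L^1_{loc}$ lower semicontinuity proved in Section 3 (Theorem \ref{T8} and the resulting (I1)) delivers ${\cal E}_{\R^n}(\bar u) \leq \liminf_k {\cal E}_{\R^n}(\bar u_k)$, while $b \geq 0$ combined with the decomposition \eqref{ebv} and the associated relaxation results of Section 3 gives $\int_{\partial \Omega} b|\tilde u|\, d{\cal H}^{n-1} \leq \liminf_k \int_{\partial \Omega} b|\tilde u_k|\, d{\cal H}^{n-1}$. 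Adding the three estimates yields $\Phi_{\cal A}(u) \leq \liminf_k \Phi_{\cal A}(u_k) = c_{\cal A}$, so $u$ is a minimizer.

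The main obstacle, and the reason the proof is not a one-line direct method, is that by (P3) the minimizing sequence can have unbounded total variation, so the classical weak* $BV$-compactness is not available and the Rellich--Kondrachov theorem does not apply directly; the affine compactness (I2), whose proof relies on the Huang--Li affine correction and Proposition \ref{P3}, is the precise substitute. A second subtle point is upgrading the lower semicontinuity (I1) to the full functional $\Phi_{\cal A}$ under merely $L^{q'}$-convergence---especially for the boundary trace, where $BV$-continuity of the trace operator cannot be invoked---and this is exactly what the Section 3 machinery, applied to the zero extensions $\bar u_k$, is designed to furnish.
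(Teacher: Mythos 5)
Your overall strategy (affine compactness (I2) to extract a strongly convergent subsequence, then lower semicontinuity of $\Phi_{\cal A}$) is the same as the paper's, and the compactness half is handled correctly. The gap is in the lower semicontinuity half: you invoke Theorem \ref{T8} for the affine term and ``the associated relaxation results of Section 3'' (i.e.\ Proposition \ref{P2}) for the boundary term directly from the strong convergence $\bar u_k \to \bar u$ in $L^1(\R^n)$. But Theorem \ref{T8} carries the additional hypothesis that $|D\bar u_k|(\R^n)$ is bounded, and Proposition \ref{P2} requires weak* convergence in $BV(\Omega)$; both amount to a uniform bound on the total variations of the minimizing sequence. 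This is exactly what (P3) says you do \emph{not} get for free from a bound on $\Vert u_k\Vert_{L^1(\Omega)}+{\cal E}_{\R^n}(\bar u_k)$: a sequence with bounded affine energy can have $|Du_k|(\Omega)\to\infty$, and in that case neither result applies as stated (in the proof of Theorem \ref{T8} the total variation bound is what produces the uniform lower bound \eqref{loweri} needed for the reverse Fatou step, and the trace is not even continuous under $L^1$ convergence). You correctly flag (P3) as the obstacle to compactness, but it equally obstructs your semicontinuity step as written.

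The missing ingredient is Corollary \ref{C4}: once you know $u_k \to u_0$ strongly in $L^q(\Omega)$ with $u_0 \in X$, hence $u_0 \neq 0$, the sequence $u_k$ is in fact bounded in $BV(\Omega)$ (this falls out of the proof of Theorem \ref{T9}: if the affine corrections $T_k$ were unbounded the limit would have to vanish). With that bound in hand you pass to a further subsequence converging weakly* in $BV(\Omega)$ and then apply Corollary \ref{C3} --- equivalently Theorem \ref{T8} for the affine term plus Proposition \ref{P2} for the boundary term --- which is precisely the paper's argument. The fix is a short insertion, but without it the three inequalities you add up at the end are not justified.
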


The next one covers critical cases.

\begin{teor} \label{T2}
The levels $c_{\cal A}$ and $d_{\cal A}$ are attained for $q = \frac{n}{n-1}$ and any $1 \leq r \leq \frac{n}{n-1}$, provided that $0 < c_{\cal A} < n \omega_n^{1/n}$ and $0 < d_{\cal A} < n \omega_n^{1/n}$, respectively.
\end{teor}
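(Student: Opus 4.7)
The plan is to run the direct method of the calculus of variations for $\Phi_{\cal A}$, taking the three ingredients (I1)--(I3) for granted. I sketch it for $c_{\cal A}$; the argument for $d_{\cal A}$ is essentially identical with one extra constraint to carry along.

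Fix $q = n/(n-1)$ and take a minimizing sequence $\{u_k\} \subset X$, so that $\Vert u_k \Vert_{L^{n/(n-1)}(\Omega)} = 1$ and $\Phi_{\cal A}(u_k) \to c_{\cal A}$. Since $b \geq 0$ and $\Vert u_k \Vert_{L^1(\Omega)}$ is uniformly bounded by H\"older on the bounded domain $\Omega$, the affine energies ${\cal E}_{\R^n}(\bar u_k)$ stay bounded, and the hypothesis $c_{\cal A} < n\omega_n^{1/n}$ activates (I3). Extract a subsequence (relabelled $u_k$) converging strongly in $L^{n/(n-1)}(\Omega)$ to some $u$. Strong convergence on the bounded domain transfers the unit-norm constraint, so $u \in X$.

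Next I pass to the limit term by term. Strong $L^1$ convergence (a consequence of $L^{n/(n-1)}$ convergence on the bounded $\Omega$) yields $\int_\Omega a |u_k|\, dx \to \int_\Omega a |u|\, dx$. Weak* $BV(\Omega)$-convergence along a further subsequence, which is delivered inside the proof of (I3), lets me apply (I1) to get ${\cal E}_{\R^n}(\bar u) \leq \liminf_k {\cal E}_{\R^n}(\bar u_k)$, while lower semicontinuity of the trace norm under weak* convergence in $BV(\Omega)$, combined with $b \geq 0$, gives the analogous inequality for the boundary term. Summing the three contributions produces $\Phi_{\cal A}(u) \leq c_{\cal A}$, so $u$ attains $c_{\cal A}$. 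For $d_{\cal A}$ the same scheme applies to a minimizing sequence in $Y$; the extra orthogonality $\int_\Omega |u_k|^{r-1} u_k\, dx = 0$ passes to the limit via strong $L^r(\Omega)$ convergence, valid since $1 \leq r \leq n/(n-1)$, so the limit remains in $Y$.

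The main obstacle is the tension between (P3) and the weak* semicontinuity demanded by (I1): a bounded affine $BV$ energy does not imply a bounded total variation, so a raw minimizing sequence may fail to be weakly* precompact in $BV(\Omega)$. This is exactly why the strict inequalities $c_{\cal A}, d_{\cal A} < n\omega_n^{1/n}$ are indispensable --- they are what the critical compactness (I3) exploits to rule out the borderline Sobolev--Zhang concentration profile (mass escaping into a characteristic function of an ellipsoid, the only extremal of \eqref{ASob}) and thereby deliver the strong $L^{n/(n-1)}$-compactness from which the remainder of the argument proceeds routinely.
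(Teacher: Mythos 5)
Your argument assumes (I3) as an available ingredient, but (I3) is not a prior result: it is exactly what Section~5 of the paper sets out to establish, and it carries essentially all of the difficulty of Theorem~\ref{T2}. Granting (I3), the remainder of your sketch (the constraint passes to the limit by strong $L^{\frac{n}{n-1}}$ convergence; $\Phi_{\cal A}(u)\le c_{\cal A}$ by Corollary~\ref{C3}) is routine and correct, but as written the proof begs the question. Since the embedding $BV(\Omega)\hookrightarrow L^{\frac{n}{n-1}}(\Omega)$ is not compact (point (P4)), the genuine task at the critical exponent is to show that the limit $u_0$ of a minimizing sequence still satisfies $\Vert u_0\Vert_{L^{n/(n-1)}(\Omega)}=1$, i.e.\ that no mass is lost by concentration. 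Your closing paragraph correctly identifies this as the obstacle and correctly attributes the role of the strict bounds $c_{\cal A},d_{\cal A}<n\omega_n^{1/n}$, but ``ruling out the borderline Sobolev--Zhang profile'' is asserted, not proved.

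Concretely, the missing content is the following two-step argument. First, from ${\cal E}_{\R^n}(\bar u_k)\ge n\omega_n^{1/n}\Vert u_k\Vert_{L^{n/(n-1)}(\Omega)}=n\omega_n^{1/n}$, $b\ge 0$, and the strong $L^1$ convergence $u_k\to u_0$ supplied by Theorem~\ref{T9}, one gets $c_{\cal A}\ge n\omega_n^{1/n}+\int_\Omega a|u_0|\,dx$, so $u_0=0$ would contradict $c_{\cal A}<n\omega_n^{1/n}$; then $u_0\neq 0$ activates Corollary~\ref{C4}, giving $BV$-boundedness, weak* convergence, and $\Phi_{\cal A}(u_0)\le c_{\cal A}$ via Corollary~\ref{C3}. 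Second, to show $u_0\in X$ one needs the superadditivity of the affine energy under truncation, ${\cal E}_{\R^n}(u)\ge{\cal E}_{\R^n}(T_hu)+{\cal E}_{\R^n}(R_hu)$ (Proposition~\ref{P5}, which rests on Theorem~\ref{T7} and a reverse Minkowski inequality and is \emph{not} the classical additive identity for the total variation), combined with testing the definition of $c_{\cal A}$ against $T_hu_k$ and $R_hu_k$, Lemma~3.1 of \cite{BW}, and the hypothesis $c_{\cal A}>0$ to arrive at $1\ge t^{\frac{n-1}{n}}+(1-t)^{\frac{n-1}{n}}$ with $t=\Vert u_0\Vert_{L^{n/(n-1)}(\Omega)}^{n/(n-1)}$; strict concavity and $u_0\neq 0$ then force $t=1$. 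None of this appears in your proposal, and the hypothesis $c_{\cal A}>0$ (respectively $d_{\cal A}>0$), which is essential in this last step, is never used in your sketch --- a sign that the argument as given cannot be complete.
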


The reasoning used in the proof of Theorems \ref{T1} and \ref{T2} produces analogous statements on the space $BV_0(\Omega)$, thanks to its weak* closure in $BV(\Omega)$ (Proposition \ref{P2}). More precisely, using (C2), the functional $\Phi_{\cal A}$, when computed at functions with zero trace in $BV(\Omega)$, becomes
\[
\Phi_{\cal A}(u) = {\cal E}_{\Omega}(u) + \int_{\Omega} a |u|\, dx.
\]
Denote by $c_{{\cal A},0}$ and $d_{{\cal A},0}$ the respective least energy levels of $\Phi_{\cal A}$ on the sets $X_0 = X \cap BV_0(\Omega)$ and $Y_0 = Y \cap BV_0(\Omega)$.

\begin{teor} \label{T3}
The levels $c_{{\cal A},0}$ and $d_{{\cal A},0}$ are attained for any $1 \leq q, r < \frac{n}{n-1}$.
\end{teor}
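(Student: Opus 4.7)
The plan is to run the scheme of Theorem~\ref{T1} inside the subspace $BV_0(\Omega)$, using Proposition~\ref{P2} to ensure the limit of a minimizing sequence keeps zero trace. I spell out only the case $c_{{\cal A},0}$; the case $d_{{\cal A},0}$ is identical after observing that the orthogonality constraint $\int_\Omega |u|^{r-1}u\,dx=0$ is preserved under $L^r$-convergence whenever $r<\tfrac{n}{n-1}$.

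Let $(u_k)\subset X_0$ be a minimizing sequence. Because $\tilde u_k=0$, the boundary integral in $\Phi_{\cal A}(u_k)$ drops, and from $\|u_k\|_{L^q(\Omega)}=1$, $|\Omega|<\infty$ and $a\in L^\infty(\Omega)$ one gets a uniform bound on $\|u_k\|_{L^1(\Omega)}$ and hence on ${\cal E}_{\R^n}(\bar u_k)$. After a harmless rescaling the sequence sits in a fixed multiple of the set $B_{\cal A}(\Omega)$, so ingredient (I2) (Theorem~\ref{T9}) delivers, up to a subsequence, a limit $u\in BV(\Omega)$ with $u_k\to u$ in $L^s(\Omega)$ for every $1\le s<\tfrac{n}{n-1}$. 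Choosing $s=q$ preserves the constraint $\|u\|_{L^q(\Omega)}=1$, while $s=1$ together with $a\in L^\infty(\Omega)$ passes the zeroth-order term $\int_\Omega a|u_k|\,dx\to \int_\Omega a|u|\,dx$. Ingredient (I1) (Corollary~\ref{C2}) then yields ${\cal E}_{\R^n}(\bar u)\le\liminf_k {\cal E}_{\R^n}(\bar u_k)$, so that $\Phi_{\cal A}(u)\le c_{{\cal A},0}$.

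To finish I must certify $u\in BV_0(\Omega)$, which is the only genuinely new step relative to Theorem~\ref{T1}. By Proposition~\ref{P2} the subspace $BV_0(\Omega)$ is weakly* closed in $BV(\Omega)$, so it suffices to arrange that $u_k\stackrel{*}{\rightharpoonup} u$ in $BV(\Omega)$ along a further subsequence. This is the step I expect to be the main obstacle: warning (P3) says that a bound on the affine energy does not automatically bound the total variation, hence the weak* extraction is not immediate from the information already invoked. It is recovered here by combining the identity $|D\bar u_k|(\R^n)=|Du_k|(\Omega)$ --- valid because $\tilde u_k=0$ --- with the tools underlying (I2), namely Proposition~\ref{P3} together with the Huang--Li total-variation estimate, to upgrade the affine-energy bound to a genuine $BV(\Omega)$ bound on the fixed domain. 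Once the extraction is legitimate, Proposition~\ref{P2} places $u$ in $BV_0(\Omega)$, giving $u\in X_0$ and $\Phi_{\cal A}(u)=c_{{\cal A},0}$, so $u$ is the sought minimizer.
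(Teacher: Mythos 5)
Your proposal is correct and follows essentially the same route as the paper: the authors likewise observe that Proposition \ref{P2} with $b=1$ makes $BV_0(\Omega)$ weakly* closed and then rerun the argument of Theorem \ref{T1} verbatim. Your recovery of the $BV(\Omega)$ bound via Proposition \ref{P3} and the Huang--Li estimate is exactly the content of Corollary \ref{C4} (applicable since the limit has unit $L^q$ norm, hence is nonzero), so nothing is missing.
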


\begin{teor} \label{T4}
The levels $c_{{\cal A},0}$ and $d_{{\cal A},0}$ are attained for $q = \frac{n}{n-1}$ and any $1 \leq r \leq \frac{n}{n-1}$, provided that $0 < c_{{\cal A},0} < n \omega_n^{1/n}$ and $0 < d_{{\cal A},0} < n \omega_n^{1/n}$, respectively.
\end{teor}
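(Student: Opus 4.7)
The proof of Theorem \ref{T4} is obtained by running the argument for Theorem \ref{T2} inside the closed subspace $BV_0(\Omega)$. The adaptation uses: (C2), which on $BV_0(\Omega)$ simplifies $\Phi_{\cal A}$ to ${\cal E}_\Omega(u)+\int_\Omega a|u|\,dx$; Proposition \ref{P2}, which gives weak* closedness of $BV_0(\Omega)$ inside $BV(\Omega)$; and the $X_0$/$Y_0$ versions of the compactness principle (I3). I only sketch the argument for $c_{{\cal A},0}$, since the orthogonality condition for $d_{{\cal A},0}$ passes to the limit because strong $L^{n/(n-1)}(\Omega)$ convergence implies strong $L^r(\Omega)$ convergence for every $r \in [1, n/(n-1)]$.

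Fix a minimizing sequence $(u_k) \subset X_0$. The constraint $\|u_k\|_{L^{n/(n-1)}}=1$ and H\"older's inequality bound $\int_\Omega a|u_k|\,dx$ uniformly, hence ${\cal E}_{\R^n}(\bar{u}_k)$ is bounded. The hypothesis $c_{{\cal A},0} < n\omega_n^{1/n}$ puts us precisely in the range where the $X_0$-analogue of (I3) applies, so a subsequence satisfies $u_k \to u$ strongly in $L^{n/(n-1)}(\Omega)$. Consequently $\|u\|_{L^{n/(n-1)}} = 1$ (so $u \in X$) and $\int_\Omega a|u_k|\,dx \to \int_\Omega a|u|\,dx$.

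The main obstacle is concluding $u \in BV_0(\Omega)$ without a uniform $BV$-bound on $(u_k)$, which is forbidden by (P3). To circumvent this, I invoke the Huang--Li control of the total variation by the affine energy modulo an $SL(n)$-action (the same tool used in the proof of (I2)): there exist $T_k \in SL(n)$ such that $u_k \circ T_k^{-1}$ enjoys uniform $BV(\R^n)$-bounds while preserving the affine energy. A weakly* convergent subsequence, transported back to $\Omega$ via affine invariance, yields weak* convergence of a subsequence of $(u_k)$ to $u$ in $BV(\Omega)$. Proposition \ref{P2} then places $u \in BV_0(\Omega)$, so $u \in X_0$.

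Once $u \in X_0$ is in hand, the weak* lower semicontinuity statement (I1) (equivalently Corollary \ref{C2}) yields ${\cal E}_{\R^n}(\bar u) \le \liminf_k {\cal E}_{\R^n}(\bar{u}_k)$, and adding the already-established convergence of the lower-order term forces $\Phi_{\cal A}(u) \le c_{{\cal A},0}$; minimality produces equality. The entire subtlety is concentrated in the $BV_0$-identification step, where the Huang--Li $SL(n)$-straightening must be reconciled with the zero-trace constraint before Proposition \ref{P2} can be invoked on the correct limit.
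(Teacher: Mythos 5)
Your reduction to the zero-trace setting is exactly the paper's: Proposition \ref{P2} with $b \equiv 1$ makes $BV_0(\Omega)$ weakly* closed, so Theorem \ref{T4} follows from running the argument for Theorem \ref{T2} inside $X_0$ and $Y_0$. The difficulty is what happens inside that argument. You invoke ``the $X_0$-analogue of (I3)'' to extract a subsequence converging strongly in $L^{\frac{n}{n-1}}(\Omega)$, and from that point on everything is routine. But (I3) is not a previously established lemma; it is precisely the assertion that Section 5 of the paper proves, and it is the entire content of the critical case. Treating it as an available tool makes the argument circular. What Theorem \ref{T9} actually provides is compactness only in $L^q(\Omega)$ for $q < \frac{n}{n-1}$, so after extracting $u_k \to u_0$ in $L^1(\Omega)$ and almost everywhere, the step that remains --- and that your proposal skips --- is ruling out loss of mass at the critical exponent, i.e.\ proving $\Vert u_0 \Vert_{L^{n/(n-1)}(\Omega)} = 1$.

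The paper does this in two moves, neither of which appears in your sketch. First, the Sobolev--Zhang inequality \eqref{ASob} applied along the minimizing sequence gives $c_{{\cal A},0} \geq n\omega_n^{1/n} + \int_\Omega a|u_0|\,dx$, so the hypothesis $c_{{\cal A},0} < n\omega_n^{1/n}$ forces $u_0 \neq 0$; Corollary \ref{C4} then yields the uniform $BV$ bound you were worried about, so the Huang--Li $SL(n)$-straightening you propose is not needed again at this stage (it is already built into Theorem \ref{T9} and Corollary \ref{C4}, whose hypothesis is exactly $u_0 \neq 0$). Second, the truncations $T_h, R_h$, the affine subadditivity ${\cal E}_{\R^n}(u) \geq {\cal E}_{\R^n}(T_h u) + {\cal E}_{\R^n}(R_h u)$ of Proposition \ref{P5}, and Lemma 3.1 of \cite{BW} combine with the hypothesis $c_{{\cal A},0} > 0$ to give
\[
1 \geq \Bigl( \Vert u_0 \Vert_{\frac{n}{n-1}}^{\frac{n}{n-1}} \Bigr)^{\frac{n-1}{n}} + \Bigl( 1 - \Vert u_0 \Vert_{\frac{n}{n-1}}^{\frac{n}{n-1}} \Bigr)^{\frac{n-1}{n}},
\]
which by strict concavity forces $\Vert u_0 \Vert_{\frac{n}{n-1}} \in \{0,1\}$, hence $=1$ since $u_0 \neq 0$. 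Note that your proposal never uses the hypothesis $c_{{\cal A},0} > 0$ at all, which is a reliable sign that the essential step is missing: without it one cannot divide by the energy level in the chain of inequalities above, and the conclusion $u_0 \in X_0$ does not follow.
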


The Sobolev-Zhang inequality on $BV(\R^n)$ yields the sharp affine variants of (${\cal P}$) and (${\cal PW}$) and also of (${\cal PS}$) and (${\cal PWS}$) for $1 \leq q \leq \frac{n}{n-1}$:

\begin{itemize}
\item[$\blacktriangleright$] Affine Poincaré inequality (${\cal AP}$):\\ There exists an optimal constant $\lambda^{\cal A}_1 > 0$ such that $\lambda^{\cal A}_1\, \Vert u\Vert_{L^1(\Omega)} \leq {\cal E}_{\R^n}(\bar{u})$;

\item[$\blacktriangleright$] Affine Poincaré-Wirtinger inequality (${\cal APW}$):\\ There exists an optimal constant $\mu^{\cal A}_1 > 0$ such that $\mu^{\cal A}_1\, \Vert u - u_\Omega\Vert_{L^1(\Omega)} \leq {\cal E}_{\R^n}(\bar{u})$;

\item[$\blacktriangleright$] Affine Poincaré-Sobolev inequality (${\cal APS}$):\\ There exists an optimal constant $\lambda^{\cal A}_q > 0$ such that $\lambda^{\cal A}_q\, \Vert u\Vert_{L^q(\Omega)} \leq {\cal E}_{\R^n}(\bar{u})$;

\item[$\blacktriangleright$] Affine Poincaré-Wirtinger-Sobolev inequality (${\cal APWS}$):\\ There exists an optimal constant $\mu^{\cal A}_q > 0$ such that $\mu^{\cal A}_q\, \Vert u - u_\Omega\Vert_{L^q(\Omega)} \leq {\cal E}_{\R^n}(\bar{u})$.

\end{itemize}

As remarked before, (${\cal AP}$) and (${\cal APS}$) are weakened versions of (${\cal P}$) and (${\cal PS}$), respectively. As for (${\cal APW}$) and (${\cal APWS}$), perhaps one would expect that the term ${\cal E}_{\R^n}(\bar{u})$ could be replaced by ${\cal E}_{\Omega}(u)$ in view of (${\cal PW}$) and (${\cal PWS}$). However, such exchange is not possible, once there are non-constant functions in $BV(\Omega)$ with zero affine $BV$ energy on $\Omega$ (Theorem \ref{T7}). In other words, there exist no positive constant $A$ so that $A\, \Vert u - u_\Omega\Vert_{L^q(\Omega)} \leq {\cal E}_{\Omega}(u)$ is valid for all $u \in BV(\Omega)$. This is a sore point of theory that contrasts drastically with the classical case.

It also deserves to be noticed that ${\cal E}_{\R^n}(\bar{u})$ and $|D u|(\Omega)$ are incomparable via a one-way inequality in $BV(\Omega)$. In effect, since ${\cal E}_{\R^n}(\bar{\chi}_\Omega) = {\cal E}_{\partial \Omega}(1) > 0$ and $|D \chi_\Omega |(\Omega) = 0$, there is no constant $C > 0$ such that ${\cal E}_{\R^n}(\bar{u}) \leq C |D u|(\Omega)$ holds for all $u \in BV(\Omega)$. On the other hand, a reverse inequality also fails in view of the example of \cite{HJM5} in $BV_0(\Omega)$. Accordingly, (${\cal APW}$) and (${\cal APWS}$) seem to be natural affine counterparts of (${\cal PW}$) and (${\cal PWS}$), respectively.

Nonetheless, the term ${\cal E}_{\Omega}(u)$ appears on the right-hand side when we restrict ourselves to functions in $BV_0(\Omega)$. In this space, we denote the respective inequalities by (${\cal AP}_0$), (${\cal APW}_0$), (${\cal APS}_0$) and (${\cal APWS}_0$).

A direct application of Theorems \ref{T1} and \ref{T3} for $r = 1$ is as follows:

\begin{teor} \label{T5}
The inequalities (${\cal AP}$) and (${\cal APW}$) and also (${\cal APS}$) and (${\cal APWS}$) with $1 \leq q < \frac{n}{n-1}$ admit extremal functions in $BV(\Omega)$. The same conclusion holds true in $BV_0(\Omega)$ for (${\cal AP}_0$), (${\cal APW}_0$), (${\cal APS}_0$) and (${\cal APWS}_0$).
\end{teor}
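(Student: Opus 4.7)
The plan is to obtain each of the eight extremal functions as a minimizer of $\Phi_{\cal A}$ on one of the constraint sets $X$, $X_0$, $Y$, $Y_0$, with trivial weights $a\equiv 0$ and $b\equiv 0$, and then read off the extremal property from the variational characterization of the corresponding sharp constant.

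For (${\cal AP}$) and (${\cal APS}$) I would take $a=0$ and $b=0$, so that $\Phi_{\cal A}(u)={\cal E}_{\R^n}(\bar u)$. Since ${\cal E}_{\R^n}$ is positively $1$-homogeneous, the sharp constant $\lambda_q^{\cal A}$ coincides with $c_{\cal A}$ computed with exponent $q$. Choosing $q=1$ gives (${\cal AP}$) and $1\le q<n/(n-1)$ gives (${\cal APS}$), and in each case Theorem \ref{T1} produces a minimizer $u^\ast\in X$. Since $\|u^\ast\|_{L^q(\Omega)}=1$, this minimizer is nonzero, so it is the desired extremal function. The zero-trace versions (${\cal AP}_0$) and (${\cal APS}_0$) follow identically from Theorem \ref{T3} applied on $X_0$, invoking (C2) to identify ${\cal E}_{\R^n}(\bar u)$ with ${\cal E}_\Omega(u)$ in that setting.

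For (${\cal APW}$) and (${\cal APWS}$) I would additionally set $r=1$, so the orthogonality constraint defining $Y$ becomes the mean-zero condition $u_\Omega=0$. For every $u\in Y$ this gives $\|u-u_\Omega\|_{L^q(\Omega)}=\|u\|_{L^q(\Omega)}=1$, and the minimization of ${\cal E}_{\R^n}(\bar u)$ over $Y$ is exactly the variational problem for $\mu_q^{\cal A}$ (restricted to zero-mean representatives, which by homogeneity suffices to realize the sharp constant). Theorem \ref{T1} then yields a minimizer $u^\ast\in Y$, which is automatically non-constant and attains equality in (${\cal APW}$) for $q=1$ or in (${\cal APWS}$) for $1\le q<n/(n-1)$. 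The $BV_0(\Omega)$ counterparts are obtained in the same way from Theorem \ref{T3} applied on $Y_0$.

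The only preliminary point to verify is that each level $c_{\cal A}$, $c_{{\cal A},0}$, $d_{\cal A}$, $d_{{\cal A},0}$ is strictly positive, so that the associated sharp constant is positive. This is immediate from the Sobolev-Zhang inequality \eqref{ASob} applied to $\bar u\in BV(\R^n)$ combined with H\"older's inequality on the bounded domain $\Omega$, which controls $\|u\|_{L^q(\Omega)}$ by a constant multiple of ${\cal E}_{\R^n}(\bar u)$ for every $1\le q\le n/(n-1)$. Beyond this, no genuine obstacle remains, as all the substantive analytic content has already been encapsulated in Theorems \ref{T1} and \ref{T3}.
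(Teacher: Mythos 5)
Your overall route is the one the paper itself takes: Theorem \ref{T5} is presented there as a direct application of Theorems \ref{T1} and \ref{T3} with $a=0$, $b=0$, $r=1$, and your treatment of (${\cal AP}$), (${\cal APS}$), (${\cal AP}_0$), (${\cal APS}_0$) via minimization over $X$ and $X_0$ is complete: by $1$-homogeneity of $u\mapsto{\cal E}_{\R^n}(\bar u)$ the sharp constant is exactly $c_{\cal A}$ (resp.\ $c_{{\cal A},0}$), and the minimizer produced by Theorem \ref{T1} (resp.\ \ref{T3}) is a nonzero extremal. The positivity of the levels via \eqref{ASob} and H\"older is also fine.

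The weak point is the parenthetical claim for the Wirtinger cases that restricting to zero-mean representatives ``by homogeneity suffices to realize the sharp constant.'' Homogeneity only lets you normalize $\Vert u-u_\Omega\Vert_{L^q(\Omega)}=1$; the reduction to $u_\Omega=0$ requires replacing $u$ by $u-u_\Omega$, and the right-hand side ${\cal E}_{\R^n}(\bar u)$ is \emph{not} invariant under $u\mapsto u+c$, because the zero extension of $u+c$ carries the boundary jump $|\tilde u+c|\,|\nu\cdot\xi|$ in place of $|\tilde u|\,|\nu\cdot\xi|$ (the paper itself emphasizes that ${\cal E}_{\R^n}(\bar\chi_\Omega)={\cal E}_{\partial\Omega}(1)>0$ although $\chi_\Omega$ is constant). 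Hence ${\cal E}_{\R^n}\bigl(\overline{u-u_\Omega}\bigr)$ need not be $\leq{\cal E}_{\R^n}(\bar u)$, and a priori one only has $\mu^{\cal A}_q=\inf\{{\cal E}_{\R^n}(\bar u):\Vert u-u_\Omega\Vert_{L^q(\Omega)}=1\}\leq d_{\cal A}=\inf_Y{\cal E}_{\R^n}(\bar u)$. If this inequality were strict, the minimizer over $Y$ furnished by Theorem \ref{T1} would not attain equality in (${\cal APW}$) or (${\cal APWS}$). So you must either prove $\mu^{\cal A}_q=d_{\cal A}$, or state the extremality relative to the mean-zero class. (In the classical (${\cal PW}$) setting this issue is invisible because $|Du|(\Omega)$ is translation invariant; in the affine setting it is genuine. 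The same caveat applies to (${\cal APW}_0$) and (${\cal APWS}_0$), where in addition $u-u_\Omega$ leaves $BV_0(\Omega)$.) To be fair, the paper's own one-line justification glosses over exactly the same point, but since you supplied an explicit reason and that reason does not cover the translation step, it should be flagged.
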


The study of extremal functions for local affine $L^p$-Sobolev type inequalities has been carried out more recently and, as far as we know, it has only been addressed in the papers \cite{ST} and \cite{HJM5} for functions with zero trace. More specifically, the first of them furnishes extremals for the affine $L^2$-Sobolev inequality on $W^{1,2}_0(\Omega)$, whereas the second one for the affine $L^p$-Poincaré inequality on $W^{1,p}_0(\Omega)$ for any $p > 1$ and on $BV_0(\Omega)$ for $p = 1$. In particular, in \cite{HJM5}, the authors provide an alternative proof of Theorem \ref{T5} for (${\cal AP}_0$) through an elegant approach based on their Lemma 1 and Theorem 9.

In the critical case $q = \frac{n}{n-1}$, one knows from \eqref{ASob} that characteristic functions of ellipsoids in $\Omega$ are extremals of (${\cal APS}$), however, exist no extremal for (${\cal APS}_0$). The usual argument of nonexistence consists in showing, by means of a standard rescaling, that the optimal constant corresponding to $BV_0(\Omega)$ is also $n \omega_n^{1/n}$. The key points are the strict continuity of $u \mapsto {\cal E}_{\R^n}(u)$ on $BV(\R^n)$ (Theorem 4.4 of \cite{Wa}) and the density of $BV^\infty_c(\R^n)$ in $BV(\R^n)$ (Corollary 3.2 of \cite{PT}), where $BV^\infty_c(\R^n)$ denotes the space of bounded functions in $BV(\R^n)$ with compact support.

We close the introduction with an application of Theorems \ref{T1} and \ref{T3} for $r > 1$.

We point out that (${\cal APW}$), (${\cal APWS}$), (${\cal APW}_0$) and (${\cal APWS}_0$) are prototypes of more general affine inequalities depending on $q$ and $r$. Precisely, for each $r \geq 1$, let $m_r: L^r(\Omega) \rightarrow \R$ be the unique function that satisfies
\[
\int_\Omega |u - m_r(u)|^{r-1} (u - m_r(u))\, dx = 0
\]
for all $u \in L^r(\Omega)$. It is important to note that $m_r$ is continuous, $1$-homogeneous and bounded on bounded subsets of $L^r(\Omega)$. Of course, $m_1(u) = u_\Omega$ for $r = 1$. The construction of $m_r$ is canonical and makes use of basic results as the mean value theorem and dominated and monotone convergence theorems.

The properties satisfied by $m_r$ together with \eqref{ASob} produce two new affine inequalities for $1 \leq q, r \leq \frac{n}{n-1}$ that extend (${\cal APW}$), (${\cal APWS}$), (${\cal APW}_0$) and (${\cal APWS}_0$).

\begin{itemize}
\item[$\blacktriangleright$] Generalized affine Poincaré-Wirtinger-Sobolev inequality (${\cal GAPWS}$) on $BV(\Omega)$:\\ There exists an optimal constant $\mu^{\cal A}_{q,r} > 0$ such that $\mu^{\cal A}_{q,r}\, \Vert u - m_r(u)\Vert_{L^q(\Omega)} \leq {\cal E}_{\R^n}(\bar{u})$.

\item[$\blacktriangleright$] Generalized affine Poincaré-Wirtinger-Sobolev inequality (${\cal GAPWS}_0$) on $BV_0(\Omega)$:\\ There exists an optimal constant $\mu^{{\cal A},0}_{q,r} > 0$ such that $\mu^{{\cal A},0}_{q,r}\, \Vert u - m_r(u)\Vert_{L^q(\Omega)} \leq {\cal E}_{\Omega}(u)$.
\end{itemize}

\begin{teor} \label{T6}
The inequalities (${\cal GAPWS}$) and (${\cal GAPWS}_0$) admit extremal functions for any $1 \leq q, r < \frac{n}{n-1}$ respectively in $BV(\Omega)$ and $BV_0(\Omega)$.
\end{teor}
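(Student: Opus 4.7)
The plan is to attack Theorem~\ref{T6} directly as a constrained minimization rather than trying a one-step reduction to the levels $d_{\cal A}$ or $d_{{\cal A},0}$ of Theorems~\ref{T1} and \ref{T3}; the obstruction to such a reduction is that $\|u-m_r(u)\|_{L^q(\Omega)}$ is translation-invariant while ${\cal E}_{\R^n}(\bar u)$ is not, so the optimal ratio may be attained at a function with $m_r(u)\neq 0$. Fix a minimizing sequence $(u_k)\subset BV(\Omega)$ for (${\cal GAPWS}$), normalized so that $\|u_k-m_r(u_k)\|_{L^q(\Omega)}=1$ and ${\cal E}_{\R^n}(\bar u_k)\to\mu^{\cal A}_{q,r}$, and decompose $u_k=v_k+c_k$ with $c_k=m_r(u_k)$ and $v_k\in Y$.

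The first step is to bound $(c_k)$. The Sobolev--Zhang inequality \eqref{ASob} applied to $\bar u_k$ gives $n\omega_n^{1/n}\|u_k\|_{L^{n/(n-1)}(\Omega)}\leq {\cal E}_{\R^n}(\bar u_k)$, and since the right-hand side is bounded along the sequence, the subcritical assumption $q<\tfrac{n}{n-1}$ together with H\"older yields $\|u_k\|_{L^q(\Omega)}\leq C$. Combined with $\|u_k-c_k\|_{L^q}=1$ and the triangle inequality, this forces $|c_k|\,|\Omega|^{1/q}\leq C+1$, and a subsequence satisfies $c_k\to c^*\in\R$.

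Next, the affine compactness (I2) / Theorem~\ref{T9}, applied to the sequence which is bounded in $\|\cdot\|_{L^1(\Omega)}+{\cal E}_{\R^n}(\overline{\cdot})$, provides a subsequence $u_k\to u^*$ in $L^s(\Omega)$ for every $1\leq s<\tfrac{n}{n-1}$, in particular in both $L^q$ and $L^r$. Continuity of $m_r:L^r(\Omega)\to\R$ gives $m_r(u_k)\to m_r(u^*)=c^*$, so the constraint $\|u^*-m_r(u^*)\|_{L^q}=1$ passes to the limit. The weak* lower semicontinuity of $u\mapsto {\cal E}_{\R^n}(\bar u)$ on $BV(\Omega)$, i.e.\ (I1) / Corollary~\ref{C2}, then yields ${\cal E}_{\R^n}(\bar u^*)\leq\liminf_k{\cal E}_{\R^n}(\bar u_k)=\mu^{\cal A}_{q,r}$, hence $u^*\in BV(\Omega)$; combined with (${\cal GAPWS}$) evaluated at $u^*$ this forces equality and exhibits $u^*$ as an extremal. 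The scheme for (${\cal GAPWS}_0$) is identical after substituting ${\cal E}_\Omega(u)={\cal E}_{\R^n}(\bar u)$ on $BV_0(\Omega)$ via (C2), with the weak* closure of $BV_0(\Omega)$ in $BV(\Omega)$ (Proposition~\ref{P2}) ensuring $u^*\in BV_0(\Omega)$.

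The delicate point is the use of Corollary~\ref{C2}, which formally requires weak* $BV$-convergence, whereas only $L^s$-convergence is available and (P3) shows that bounded affine energy alone does not bound the total variation. This is the same issue already met in the proofs of Theorems~\ref{T1} and \ref{T3} and is handled by invoking the Huang--Li boundedness principle together with Proposition~\ref{P3}, which allow one to reduce to a sub-subsequence whose total variation is under control and thereby justify the lower-semicontinuity step.
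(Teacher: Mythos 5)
Your argument is correct, but it follows a genuinely different route from the paper's. The paper obtains Theorem \ref{T6} as a direct application of Theorems \ref{T1} and \ref{T3} with $a\equiv 0$ and $b\equiv 0$: the minimizers of $u\mapsto{\cal E}_{\R^n}(\bar u)$ over $Y$ and of ${\cal E}_\Omega$ over $Y_0$ furnished there are taken as the extremals of (${\cal GAPWS}$) and (${\cal GAPWS}_0$). That reduction implicitly identifies the optimal constant $\mu^{\cal A}_{q,r}$ with the level $d_{\cal A}$ (for $a=b=0$); one only gets $\mu^{\cal A}_{q,r}\leq d_{\cal A}$ for free, and the reverse inequality is exactly the point you flag: since $\bar u$ and $\overline{u-m_r(u)}$ differ by the jump of $m_r(u)\chi_\Omega$ across $\partial\Omega$, the quotient ${\cal E}_{\R^n}(\bar u)/\Vert u-m_r(u)\Vert_{L^q(\Omega)}$ is not invariant under $u\mapsto u-m_r(u)$ (and $u-m_r(u)$ leaves $BV_0(\Omega)$ in the second case). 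Your direct minimization of the quotient produces an extremal for the inequality as literally stated without having to resolve that identification; the price is that you re-run, for this specific functional, the compactness scheme already packaged in Theorems \ref{T1} and \ref{T3}, using the same ingredients: Sobolev--Zhang for the a priori $L^{\frac{n}{n-1}}$ bound (hence boundedness of $m_r(u_k)$), Theorem \ref{T9} for $L^q$- and $L^r$-compactness, continuity of $m_r$ to pass the normalization to the limit, Corollary \ref{C2} for lower semicontinuity, and Proposition \ref{P2} for the weak* closedness of $BV_0(\Omega)$. Concerning the ``delicate point'' in your last paragraph: the cleanest reference is Corollary \ref{C4} --- after rescaling $u_k$ into $B_{\cal A}(\Omega)$, the strong $L^q$ convergence to $u^*\neq 0$ (nonzero because $\Vert u^*-m_r(u^*)\Vert_{L^q(\Omega)}=1$) yields boundedness of $u_k$ in $BV(\Omega)$ and hence weak* subsequential convergence to $u^*$; this is precisely the Huang--Li/Proposition \ref{P3} mechanism you invoke, already recorded in the paper in the form you need.
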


\section{Background on the space $BV(\Omega)$}

This section is devoted to some basic definitions and classical results related to functions of bounded variation. For some complete references on the subject, we refer to the books \cite{ABM}, \cite{EG} and \cite{G}.

Let $\Omega$ be a open subset of $\R^n$ with $n \geq 2$. A function $u \in L^1(\Omega)$ is said to be of bounded variation in $\Omega$, if the distributional derivative of $u$ is a vector-valued Radon measure $Du = (D_1 u, \ldots, D_n u)$ in $\Omega$, that is, $D_i u$ is a Radon measure satisfying
\[
\int_\Omega \varphi D_i u = - \int_\Omega \frac{\partial \varphi}{\partial x_i} u\; dx
\]
for every $u \in C^\infty_0(\Omega)$. The vector space of all functions of bounded variation in $\Omega$ is denoted by $BV(\Omega)$.

The total variation of $u$ is defined by
\begin{eqnarray*}
|Du|(\Omega) &=& \sup\left\{\sum_{i = 1}^n \int_\Omega \psi_i D_i u\; dx:\, \psi = (\psi_1, \ldots, \psi_n) \in C^\infty_0(\Omega, \R^n),\, |\psi| \leq 1 \right\}\\
&=& \sup\left\{- \int_\Omega u\, {\rm div}\, \psi\; dx:\, \psi \in C^\infty_0(\Omega, \R^n),\, |\psi| \leq 1 \right\},
\end{eqnarray*}
where $|\psi| = (\psi_1^2 + \cdots + \psi_n^2)^{1/2}$. The variation $|Du|$ is a positive Radon measure on $\Omega$. Denote by $\sigma_u$ the Radon-Nikodym derivative of $D u$ with respect to $|Du|$. Then, $\sigma_u: \Omega \rightarrow \R^n$ is a measurable field satisfying $|\sigma_u| = 1$ almost everywhere in $\Omega$ (w.r.t. $|Du|$) and $d(Du) = \sigma_u d(|Du|)$.

For $u \in BV(\Omega)$, the Lebesgue-Radon-Nikodym decomposition of the measure $Du$ is given by
\[
Du = \nabla u\, {\cal L} + \sigma_u^s\, |D^s u|,
\]
where $\nabla u$ and $D^s u$ denote respectively the (density) absolutely continuous part and the singular part of $Du$ with respect to the $n$-dimensional Lebesgue measure ${\cal L}$ and $\sigma_u^s$ is the Radon-Nikodym derivative of $D^s u$ with respect to its total variation measure $|D^s u|$. In particular,
\[
|Du| = |\nabla u|\, {\cal L} + |D^s u|.
\]

The space $BV(\Omega)$ is Banach with respect to the norm
\[
\Vert u \Vert_{BV(\Omega)} = \Vert u \Vert_{L^1(\Omega)} + |Du|(\Omega),
\]
however it is neither separable nor reflexive.

The strict (intermediate) topology is induced by the metric
\[
d(u,v) = \left| \, |D u|(\Omega) - |D v|(\Omega) \, \right| + \Vert u - v \Vert_{L^1(\Omega)}.
\]

The weak* topology, the weakest of the three ones, is quite appropriate for dealing with minimization problems. A sequence $u_k$ converges weakly* to $u$ in $BV(\Omega)$, if $u_k \rightarrow u$ strongly in $L^1(\Omega)$ and $Du_k \rightharpoonup Du$ weakly* in the measure sense, that is,

\[
\int_\Omega \varphi Du_k \rightarrow \int_\Omega \varphi Du
\]
for every $\varphi \in C^\infty_0(\Omega)$.

Assume that $\Omega$ is a bounded open with Lipschitz boundary. We select below some well-known properties that will be used in the next sections:

\begin{itemize}
\item[$\bullet$] Every bounded sequence in $BV(\Omega)$ admits a weakly* convergent subsequence;
\item[$\bullet$] Every weakly* convergent sequence in $BV(\Omega)$ is bounded;
\item[$\bullet$] $BV(\Omega)$ is embedded continuously into $L^q(\Omega)$ for $1 \leq q \leq \frac{n}{n-1}$ and compactly for $1 \leq q < \frac{n}{n-1}$;
\item[$\bullet$] Each function $u \in BV(\Omega)$ admits a boundary trace $\tilde{u}$ in $L^1(\partial \Omega)$ and the trace operator $u \mapsto \tilde{u}$ is continuous on $BV(\Omega)$ with respect to the strict topology;
\item[$\bullet$] For any function $u \in BV(\Omega)$, its zero extension $\bar{u}$ outside of $\Omega$ belongs to $BV(\R^n)$;
\item[$\bullet$] $\Vert u \Vert'_{BV(\Omega)} = |D u|(\Omega) + \Vert \tilde{u} \Vert_{L^1(\partial \Omega)}$ defines a norm on $BV(\Omega)$ equivalent to the usual norm $\Vert u \Vert_{BV(\Omega)}$;
\item[$\bullet$] $W^{1,1}(\R^n)$ is dense in $BV(\R^n)$ with respect to the strict topology.
\end{itemize}

\section{Lower weak* semicontinuity of ${\cal E}_{\R^n}$}

For an open subset $\Omega \subset \R^n$ and $u \in BV(\Omega)$, consider the affine $BV$ energy

\[
{\cal E}_{\Omega}(u) = \alpha_n \left( \int_{\s_{n-1}} \left( \int_{\Omega} | \sigma_{u}(x) \cdot \xi |\, d(|D u|)(x) \right)^{-n} d\xi\right)^{-\frac 1n}.
\]
We start by giving an answer to the question:

\begin{center}
When is the affine energy ${\cal E}_{\Omega}(u)$ zero?
\end{center}

For each $\xi \in \s^{n-1}$, denote by $\Psi_\xi$ the functional on $BV(\Omega)$,
\[
\Psi_\xi(u) = \int_{\Omega} | \sigma_u(x) \cdot \xi |\, d(|Du|)(x).
\]

\begin{teor} \label{T7}
Let $u \in BV(\Omega)$. Then, ${\cal E}_{\Omega}(u) = 0$ if, and only if, $\Psi_{\tilde{\xi}}(u) = 0$ for some $\tilde{\xi} \in \s^{n-1}$.
\end{teor}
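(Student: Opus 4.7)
The key observation that drives both directions is a simple continuity property: the map $\xi \mapsto \Psi_\xi(u)$ is Lipschitz on $\s^{n-1}$. Indeed, for any $\xi_1,\xi_2 \in \s^{n-1}$ the estimate
\[
\left| \Psi_{\xi_1}(u) - \Psi_{\xi_2}(u) \right| \leq \int_\Omega \left| |\sigma_u \cdot \xi_1| - |\sigma_u \cdot \xi_2| \right| d(|Du|) \leq |\xi_1 - \xi_2|\, |Du|(\Omega),
\]
follows from the reverse triangle inequality and $|\sigma_u| = 1$ $|Du|$-a.e. So the plan is to combine this Lipschitz control with the integrability (or non-integrability) on $\s^{n-1}$ of the relevant negative power.

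For the easy direction ($\Leftarrow$), assume first that $|Du|(\Omega) > 0$ and that $\Psi_{\tilde \xi}(u) = 0$ for some $\tilde \xi \in \s^{n-1}$. From the Lipschitz bound above, $\Psi_\xi(u) \leq |Du|(\Omega)\, |\xi - \tilde \xi|$ for every $\xi \in \s^{n-1}$, hence
\[
\Psi_\xi(u)^{-n} \geq |Du|(\Omega)^{-n}\, |\xi - \tilde \xi|^{-n}.
\]
Parametrizing a neighborhood of $\tilde \xi$ on $\s^{n-1}$ and using that the $(n-1)$-dimensional surface measure element behaves like $r^{n-2}\, dr$ (with $r = |\xi - \tilde\xi|$), the right-hand side is not integrable since $\int_0^{\varepsilon} r^{-n} r^{n-2}\, dr = \int_0^{\varepsilon} r^{-2}\, dr = +\infty$. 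Therefore $\int_{\s^{n-1}} \Psi_\xi(u)^{-n}\, d\xi = +\infty$ and ${\cal E}_\Omega(u) = 0$. The degenerate case $|Du|(\Omega) = 0$ is trivial: then $\Psi_\xi(u) \equiv 0$ and ${\cal E}_\Omega(u) = 0$ by definition (adopting the standard convention $0^{-n} = +\infty$, consistent with the limit form of the energy).

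For the converse ($\Rightarrow$), I argue by contrapositive. Suppose $\Psi_\xi(u) > 0$ for all $\xi \in \s^{n-1}$. By the Lipschitz continuity established above, $\xi \mapsto \Psi_\xi(u)$ is continuous on the compact set $\s^{n-1}$, so it attains its positive infimum $m := \min_{\xi \in \s^{n-1}} \Psi_\xi(u) > 0$. Consequently $\Psi_\xi(u)^{-n} \leq m^{-n}$ uniformly on $\s^{n-1}$, so $\int_{\s^{n-1}} \Psi_\xi(u)^{-n}\, d\xi \leq m^{-n}\, n\omega_n < \infty$, and hence ${\cal E}_\Omega(u) > 0$.

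No step presents a real obstacle here; the only subtlety is the local-coordinate computation on $\s^{n-1}$ showing that $|\xi - \tilde\xi|^{-n}$ is not surface-integrable near $\tilde\xi$, and the minor bookkeeping to handle $|Du|(\Omega) = 0$. Everything else is a direct consequence of Lipschitz continuity of $\xi \mapsto \Psi_\xi(u)$ together with compactness of $\s^{n-1}$.
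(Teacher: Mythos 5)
Your proof is correct, and while the easy implication (continuity of $\xi \mapsto \Psi_\xi(u)$ plus compactness of $\s^{n-1}$ gives a uniform positive lower bound, hence ${\cal E}_\Omega(u)>0$) coincides with the paper's argument, your treatment of the converse is genuinely different and simpler. The paper takes a maximal linearly independent family $\{\xi_{n-m+1},\dots,\xi_n\}$ of null directions, splits into the cases $m=n$ and $0<m<n$, deduces from $\Psi_{\xi_i}(u)=0$ that the corresponding components of $\sigma_u$ vanish $|Du|$-a.e., bounds $|\sigma_u(x)\cdot\xi|\le |a(\xi)|$ by Cauchy--Schwarz, and then shows $\int_{\s^{n-1}}|a(\xi)|^{-n}\,d\xi=\infty$ via a projection of the spherical measure onto the $(n-m)$-plane (obtaining the divergence rate $\int_0^{\cdot}\rho^{-m-1}d\rho$). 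You bypass all of this: the single Lipschitz estimate $\Psi_\xi(u)\le |Du|(\Omega)\,|\xi-\tilde\xi|$ at one null direction $\tilde\xi$, combined with the non-integrability of $|\xi-\tilde\xi|^{-n}$ on the $(n-1)$-dimensional sphere ($\int_0^\varepsilon r^{-n}r^{n-2}dr=\int_0^\varepsilon r^{-2}dr=+\infty$), already forces $\int_{\s^{n-1}}\Psi_\xi(u)^{-n}d\xi=+\infty$, and your separate treatment of $|Du|(\Omega)=0$ covers the degenerate case the paper handles via ${\cal E}_\Omega(u)\le |Du|(\Omega)$. What the paper's finer decomposition buys is the precise divergence rate as a function of the number $m$ of independent null directions, but that extra information is not needed for the equivalence; your route is shorter, avoids the orthonormal-basis bookkeeping, and is no less rigorous provided one records (as you do) the standard cap estimate ${\cal H}^{n-1}(\{\xi\in\s^{n-1}:|\xi-\tilde\xi|\le r\})\approx r^{n-1}$.
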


\begin{proof}
The sufficiency is the easy part. In fact, assume that $\Psi_{\xi}(u) > 0$ for all $\xi \in \s^{n-1}$. Thanks to the continuity of $\xi \in \s^{n-1} \mapsto \Psi_{\xi}(u)$, there exists a constant $c > 0$ so that $\Psi_{\xi}(u) \geq c$ for all $\xi \in \s^{n-1}$. But this lower bound immediately yields ${\cal E}_{\Omega}(u) \geq c \alpha_n (n \omega_n)^{-1/n} > 0$.

Conversely, we prove that ${\cal E}_{\Omega}(u) = 0$ whenever $\Psi_{\tilde{\xi}}(u) = 0$ for some $\tilde{\xi} \in \s^{n-1}$. Let $m \in \N$ be the maximum number of linearly independent vectors $\xi \in \s^{n-1}$ such that $\Psi_\xi(u) = 0$. If $m = n$, then clearly $Du = 0$ in $\Omega$ and thus, by \eqref{comp1}, we have ${\cal E}_{\Omega}(u) = 0$. Else, choose an orthonormal basis $\{\xi_1,\ldots,\xi_n\}$ of $\R^n$ so that $\Psi_{\xi_i}(u) = 0$ for $i = n-m+1,\ldots,n$, which correspond to the last $m$ vectors of basis with $0 < m < n$.

For $x \in \Omega$ and $\xi \in \s^{n-1}$, write
\[
\sigma_{u}(x) = \sigma_1(x) \xi_1 + \cdots + \sigma_n(x) \xi_n\ \ {\rm and}\ \ \xi = a_1 \xi_1 + \cdots + a_n \xi_n.
\]
The condition $\Psi_{\xi_i}(u) = 0$ implies that $\sigma_i(x) = 0$ for $i = n-m+1,\ldots,n$. So, the Cauchy-Schwarz inequality gives
\[
| \sigma_{u}(x) \cdot \xi | = | \sigma_1(x) a_1 + \cdots + \sigma_{n-m}(x) a_{n-m} | \leq \left( a_1^2 + \cdots + a_{n-m}^2 \right)^{1/2}.
\]
Set $a(\xi) = (a_1, \ldots, a_{n-m})$ and $a'(\xi) = (a_{n-m+1}, \ldots, a_n)$. Since $0 < m < n$, we get
\begin{eqnarray*}
\int_{\s^{n-1}} \left( \int_{\Omega} | \sigma_{u}(x) \cdot \xi |\, d(|Du|)(x) \right)^{-n} d\xi &\geq& |D u|(\Omega)^{-n} \int_{\s^{n-1}} |a(\xi)|^{-n} d\xi \\
&\geq& |D u|(\Omega)^{-n} \int_{|a(\xi)| \leq \sqrt{3}/2} |a(\xi)|^{-n} d\xi \\
&\geq& \frac{m \omega_m}{2^{m-1}} |D u|(\Omega)^{-n} \int_{|a(\xi)| \leq \sqrt{3}/2} |a(\xi)|^{-n} da(\xi) \\
&=& (n-m) \omega_{n-m} \frac{m \omega_m}{2^{m-1}} |D u|(\Omega)^{-n} \int_0^{\sqrt{3}/2} \rho^{- m -1} d\rho \\
&=& \infty,
\end{eqnarray*}
and hence ${\cal E}_{\Omega}(u) = 0$.
\end{proof}

An interesting application of Theorem \ref{T7}, of independent interest, is

\begin{cor} \label{C1}
Let $\Omega \subset \R^n$ be a bounded open with Lipschitz boundary. Then,
\[
{\cal E}_{\R^n}(\bar{u}) \geq {\cal E}_\Omega(u) + {\cal E}_{\partial \Omega}(\tilde{u})
\]
for all $u \in BV(\Omega)$ with $\tilde{u} \neq 0$ on $\partial \Omega$ (a.e.) or without any restriction in case $\partial \Omega$ is non-flat, where the definitions of ${\cal E}_{\partial \Omega}(\tilde{u})$ and non-flat boundary were given in the comparison (C3) of the introduction.
\end{cor}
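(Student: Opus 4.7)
My plan is to recognize the statement as an instance of the reverse Minkowski inequality at the negative exponent $-n$. Writing
\[
F(\xi)=\int_{\Omega}|\sigma_u(x)\cdot\xi|\,d(|Du|)(x),\qquad G(\xi)=\int_{\partial\Omega}|\tilde{u}(x)|\,|\nu(x)\cdot\xi|\,d{\cal H}^{n-1}(x)
\]
for $\xi\in\s^{n-1}$, the integral formula for ${\cal E}_{\R^n}(\bar{u})$ displayed just before the list (C1)--(C3) rewrites the claim, after cancelling the common factor $\alpha_n$, as
\[
\left(\int_{\s^{n-1}}(F+G)^{-n}\,d\xi\right)^{-1/n}\geq\left(\int_{\s^{n-1}}F^{-n}\,d\xi\right)^{-1/n}+\left(\int_{\s^{n-1}}G^{-n}\,d\xi\right)^{-1/n}.
\]
This is the standard reverse Minkowski inequality for positive measurable functions at the exponent $p=-n$, which is derivable from the reverse H\"older inequality by the usual homogeneity argument. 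Thus the substantive part of the proof consists of ensuring that $F$ and $G$ are strictly positive almost everywhere on $\s^{n-1}$ and handling cleanly the degenerate regimes.

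For the positivity of $F$, I would invoke Theorem \ref{T7}: either ${\cal E}_\Omega(u)>0$, in which case $F(\xi)=\Psi_\xi(u)>0$ for every $\xi\in\s^{n-1}$; or ${\cal E}_\Omega(u)=0$, in which case the claim reduces to ${\cal E}_{\R^n}(\bar{u})\geq {\cal E}_{\partial\Omega}(\tilde{u})$, which is immediate from the pointwise bound $(F+G)^{-n}\leq G^{-n}$. For the positivity of $G$, consider each hypothesis separately. If $\partial\Omega$ is non-flat, then $\nu\cdot\xi\neq 0$ ${\cal H}^{n-1}$-a.e.\ on $\partial\Omega$ for every $\xi$, so $G(\xi)>0$ whenever $\tilde{u}$ is not a.e.\ zero on $\partial\Omega$; the remaining possibility $\tilde{u}\equiv 0$ places $u$ in $BV_0(\Omega)$, and then (C2) delivers the equality ${\cal E}_{\R^n}(\bar{u})={\cal E}_\Omega(u)$. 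Under the alternative assumption $\tilde{u}\neq 0$ a.e.\ on $\partial\Omega$, the key input is that for every fixed $\xi\in\s^{n-1}$ the divergence theorem applied to the vector field $V(x)=(\xi\cdot x)\,\xi$ yields
\[
|\Omega|=\int_{\partial\Omega}(\xi\cdot x)(\nu(x)\cdot\xi)\,d{\cal H}^{n-1}(x),
\]
which forces $\{x\in\partial\Omega:\nu(x)\cdot\xi\neq 0\}$ to have positive ${\cal H}^{n-1}$-measure; intersecting this set with $\{\tilde{u}\neq 0\}$ gives $G(\xi)>0$ for every $\xi$.

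Once the almost everywhere positivity of $F$ and $G$ is in hand, the reverse Minkowski inequality applied to $(F,G)$ on $(\s^{n-1},d\xi)$ yields the conclusion upon restoring the factor $\alpha_n$. The main obstacle, as I see it, is not the reverse Minkowski inequality itself, which is classical, but the careful verification that the two alternative hypotheses of the statement really do suffice to guarantee positivity of $G$ at every $\xi$; the divergence theorem computation above is precisely the technical observation that resolves the case $\tilde{u}\neq 0$ a.e.\ even when $\partial\Omega$ is permitted to be flat in some directions. The degenerate regimes are then handled by the direct monotonicity argument outlined for $F$ and, symmetrically, for $G$.
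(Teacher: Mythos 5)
Your proposal is correct and follows essentially the same route as the paper: reduce to the case where both the interior term $F(\xi)=\Psi_\xi(u)$ and the boundary term $G(\xi)$ are everywhere positive (using Theorem \ref{T7} for $F$ and the stated hypotheses for $G$, with the degenerate cases dispatched by the pointwise monotonicity $(F+G)^{-n}\leq F^{-n},G^{-n}$), and then apply the reverse Minkowski inequality at exponent $-n$. The only difference is that you make explicit, via the divergence-theorem identity $|\Omega|=\int_{\partial\Omega}(\xi\cdot x)(\nu\cdot\xi)\,d{\cal H}^{n-1}$, why $\{\nu\cdot\xi\neq 0\}$ has positive measure for every $\xi$ under the hypothesis $\tilde{u}\neq 0$ a.e., a point the paper asserts without proof.
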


\begin{proof}
Firstly, the identity
\[
{\cal E}_{\R^n}(\bar{u}) = \alpha_n \left( \int_{\s_{n-1}} \left( \int_\Omega | \sigma_u(x) \cdot \xi |\, d(|Du|)(x) + \int_{\partial \Omega} |\tilde{u}(x)|\, |\nu(x) \cdot \xi|\, d{\cal H}^{n-1}(x) \right)^{-n} d\xi\right)^{-\frac 1n}
\]
gives ${\cal E}_{\R^n}(\bar{u}) \geq {\cal E}_\Omega(u)$ and ${\cal E}_{\R^n}(\bar{u}) \geq {\cal E}_{\partial \Omega}(\tilde{u})$. Therefore, if ${\cal E}_\Omega(u) = 0$ or ${\cal E}_{\partial \Omega}(\tilde{u}) = 0$, the conclusion follows.

Assume that ${\cal E}_\Omega(u)$ and ${\cal E}_{\partial \Omega}(\tilde{u})$ are nonzero. Set $g(\xi) = \Psi_\xi(u)$ and $\tilde{g}(\xi) = \tilde{\Psi}_\xi(u)$, where
\[
\tilde{\Psi}_\xi(u) = \int_{\partial \Omega} |\tilde{u}(x)|\, |\nu(x) \cdot \xi|\, d{\cal H}^{n-1}(x).
\]
By Theorem \ref{T7}, we have $g(\xi) > 0$ for all $\xi \in \s^{n-1}$. Moreover, the condition ${\cal E}_{\partial \Omega}(\tilde{u}) \neq 0$ and the assumptions assumed in the statement imply that $\tilde{g}(\xi) > 0$ for all $\xi \in \s^{n-1}$. Then, applying the Minkowski inequality for negative parameters, we get
\begin{eqnarray*}
{\cal E}_{\R^n}(\bar{u}) &=& \alpha_n \left( \int_{\s_{n-1}} \left( g(\xi) + \tilde{g}(\xi) \right)^{-n} d\xi\right)^{-\frac 1n} \\
&\geq& \alpha_n \left( \int_{\s_{n-1}} g(\xi)^{-n} d\xi\right)^{-\frac 1n} + \alpha_n \left( \int_{\s_{n-1}} \tilde{g}(\xi)^{-n} d\xi\right)^{-\frac 1n} \\
&=& {\cal E}_\Omega(u) + {\cal E}_{\partial \Omega}(\tilde{u}).
\end{eqnarray*}
\end{proof}

The next step is to establish the lower weak* semicontinuity of ${\cal E}_{\R^n}$ on $L^1_{loc}(\R^n)$ under uniform boundedness of the total variation. The proof makes use, beyond Theorem \ref{T7}, of essential results by Goffman and Serrin (Theorems 2 and 3 of \cite{GS}). We also refer to \cite{AD} and \cite{APR} and references therein for various extensions and improvements of \cite{GS}.

Let $f: \R^n \rightarrow \R$ be a nonnegative convex function with linear growth, that is, $f(w) \leq M(|w| + 1)$ for all $w \in \R^n$, where $M > 0$ is a constant. Define the recession function $f_\infty : \R^n \rightarrow \R$ associated to $f$ by

\[
f_\infty(w) = \limsup_{t \rightarrow \infty} \frac{f(t w)}{t}.
\]
For $u \in BV(\R^n)$, write $Du = \nabla u\, {\cal L} + \sigma_u^s\, |D^s u|$ and let $\Psi: BV(\R^n) \rightarrow \R$ defined by

\[
\Psi(u) = \int_{\R^n} f(\nabla u(x))\, dx + \int_{\R^n} f_\infty(\sigma_u^s(x))\, d(|D^s u|)(x).
\]

\begin{propo}[Goffman-Serrin Theorem] \label{P1}
The functional $\Psi$ is strongly lower semicontinuous on $L^1_{loc}(\R^n)$.
\end{propo}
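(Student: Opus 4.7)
The plan is to obtain a dual representation of $\Psi$ as a supremum of functionals that are continuous with respect to $L^1_{loc}(\R^n)$ convergence; lower semicontinuity then follows automatically since a pointwise supremum of continuous functionals is lower semicontinuous. This is precisely the strategy underlying Goffman and Serrin's result.

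First I would invoke convex duality. Let $f^*$ denote the Legendre--Fenchel conjugate of $f$. The linear growth $f(w) \leq M(|w|+1)$ forces the effective domain $K := \{v \in \R^n : f^*(v) < \infty\}$ to be bounded (in fact $K \subset \overline{B}_M(0)$), while convexity and lower semicontinuity give $f(w) = \sup_{v \in K}(v \cdot w - f^*(v))$. A direct computation from the definition of $f_\infty$ then identifies the recession function with the support function of $K$, namely $f_\infty(w) = \sup_{v \in K} v \cdot w$.

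For any test field $\varphi \in C_c^1(\R^n; K)$, distributional integration by parts against the decomposition $Du = \nabla u \, {\cal L} + \sigma_u^s \, |D^s u|$ yields
\[
-\int_{\R^n} u \, {\rm div}\,\varphi \, dx = \int_{\R^n} \varphi \cdot \nabla u \, dx + \int_{\R^n} \varphi \cdot \sigma_u^s \, d(|D^s u|).
\]
Combining this identity with the pointwise bounds $\varphi(x) \cdot \nabla u(x) - f^*(\varphi(x)) \leq f(\nabla u(x))$ and $\varphi(x) \cdot \sigma_u^s(x) \leq f_\infty(\sigma_u^s(x))$ produces
\[
-\int_{\R^n} u \, {\rm div}\,\varphi \, dx - \int_{\R^n} f^*(\varphi) \, dx \leq \Psi(u).
\]
The content of Theorems 2 and 3 of \cite{GS} is precisely that the reverse inequality holds after taking the supremum over admissible $\varphi$, so that
\[
\Psi(u) = \sup_{\varphi \in C_c^1(\R^n; K)} \left\{ -\int_{\R^n} u \, {\rm div}\,\varphi \, dx - \int_{\R^n} f^*(\varphi) \, dx \right\}.
\]

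Once this representation is in hand, the conclusion is immediate: for each fixed $\varphi$, the term $\int f^*(\varphi) \, dx$ is independent of $u$ and $u \mapsto -\int u \, {\rm div}\,\varphi \, dx$ is $L^1_{loc}$-continuous, since ${\rm div}\,\varphi$ is a fixed element of $C_c(\R^n)$. A supremum of continuous functionals is lower semicontinuous, and the proposition follows. The main obstacle is proving the reverse inequality in the dual representation: one must construct test fields $\varphi$ that simultaneously approximate the pointwise supremum in $f(\nabla u)$ (via mollification together with a measurable selection into $K$) and realize the singular contribution $f_\infty(\sigma_u^s)$ through the support-function identity against the measure $|D^s u|$. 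These delicate approximation arguments constitute Theorems 2 and 3 of \cite{GS} and may be cited directly.
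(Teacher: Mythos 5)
The paper supplies no proof of Proposition \ref{P1} at all---it is imported as a black box from Theorems 2 and 3 of \cite{GS}---so your sketch is being compared with a citation rather than with an internal argument. Your route is the standard convex-duality one and it is sound: Fenchel--Young plus integration by parts gives the easy inequality $-\int_{\R^n}u\,\mathrm{div}\,\varphi\,dx-\int_{\R^n}f^*(\varphi)\,dx\le\Psi(u)$ for $\varphi\in C^1_c(\R^n;K)$, a supremum of $L^1_{loc}$-continuous affine functionals is lower semicontinuous, and the only hard step (attaining $\Psi(u)$ in the supremum) is precisely the relaxation theorem you hand back to \cite{GS}; so in substance you and the authors lean on the same external source, just for slightly different statements. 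Two remarks. First, the representation as written needs $f(0)=0$: if $f(0)>0$ then $f(\nabla u)\ge f(0)/2$ on the infinite-measure set where $|\nabla u|$ is small, so $\Psi\equiv+\infty$ on $BV(\R^n)$, whereas when $\inf f=0$ your supremum is bounded above by $M\,|Du|(\R^n)$; this degenerate case must be disposed of separately (lower semicontinuity is of course vacuous there), and one should also restrict to $\varphi$ with $f^*(\varphi)$ integrable. Second---and this is where your approach genuinely buys something---the only integrands the paper ever feeds into Proposition \ref{P1} are $f^\xi(w)=|w\cdot\xi|$, which are $1$-homogeneous with conjugate equal to the indicator of the segment $\{t\xi:\ |t|\le1\}$; for these your formula collapses to $\Psi_\xi(u)=\sup\{\int_{\R^n}\psi\,\xi\cdot d(Du):\ \psi\in C^1_c(\R^n),\ |\psi|\le1\}$, the same elementary representation used for the total variation itself. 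Carrying your computation through for $f^\xi$ therefore yields a short, self-contained proof of everything Theorem \ref{T8} actually requires, with no appeal to \cite{GS} at all.
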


\begin{teor} \label{T8}
If $u_k \rightarrow u_0$ strongly in $L^1_{loc}(\R^n)$ and $|D u_k|(\R^n)$ is bounded, then
\[
{\cal E}_{\R^n}(u_0) \leq \liminf_{k \rightarrow \infty} {\cal E}_{\R^n}(u_k).
\]
\end{teor}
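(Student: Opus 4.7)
My approach has two stages: first apply Proposition \ref{P1} slice-by-slice to each functional $\Psi_\xi$, then transfer the resulting pointwise lower semicontinuity through the spherical integration with the decreasing weight $t \mapsto t^{-n}$. For the slicewise step, I fix $\xi \in \s^{n-1}$ and take $f(w) = |w \cdot \xi|$, which is nonnegative, convex and $1$-homogeneous, hence $f_\infty = f$. Using the Lebesgue decomposition $Du = \nabla u\, {\cal L} + \sigma_u^s\, |D^s u|$ (and recalling that $\sigma_u = \nabla u/|\nabla u|$ on the absolutely continuous part), a direct splitting of the integral against $|Du|$ yields
\[
\Psi_\xi(u) = \int_{\R^n} |\nabla u(x) \cdot \xi|\, dx + \int_{\R^n} |\sigma_u^s(x) \cdot \xi|\, d(|D^s u|)(x),
\]
which is exactly the functional of Proposition \ref{P1}. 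Consequently $\Psi_\xi$ is strongly lower semicontinuous on $L^1_{loc}(\R^n)$, so $\Psi_\xi(u_0) \leq \liminf_{k \to \infty} \Psi_\xi(u_k)$ for every $\xi \in \s^{n-1}$.

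The real difficulty is that $t \mapsto t^{-n}$ reverses inequalities, so the preceding pointwise bound does not at once pass under the integral $\int_{\s^{n-1}}(\cdot)^{-n}\, d\xi$. My remedy is to upgrade to uniform convergence on $\s^{n-1}$ by Arzel\`a-Ascoli. The elementary Lipschitz estimate
\[
|\Psi_\xi(u) - \Psi_{\xi'}(u)| \leq |\xi - \xi'|\, |Du|(\R^n),
\]
combined with the hypothesis $|Du_k|(\R^n) \leq C$, makes $\{\xi \mapsto \Psi_\xi(u_k)\}_k$ an equi-Lipschitz, equibounded family on $\s^{n-1}$. I therefore extract a subsequence (not relabeled) along which $\Psi_\xi(u_k) \to \ell(\xi)$ uniformly on $\s^{n-1}$, and the pointwise lower semicontinuity above forces $\ell(\xi) \geq \Psi_\xi(u_0)$.

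Discarding the trivial case ${\cal E}_{\R^n}(u_0) = 0$, I assume ${\cal E}_{\R^n}(u_0) > 0$. Theorem \ref{T7} then guarantees $\Psi_\xi(u_0) > 0$ for \emph{every} $\xi \in \s^{n-1}$, and continuity of $\xi \mapsto \Psi_\xi(u_0)$ together with compactness of $\s^{n-1}$ delivers a constant $c > 0$ with $\Psi_\xi(u_0) \geq c$. Hence $\ell(\xi) \geq c$, and by the uniform convergence $\Psi_\xi(u_k) \geq c/2$ for all large $k$, uniformly in $\xi$. This keeps us safely away from the singularity of $t^{-n}$, so $\Psi_\xi(u_k)^{-n} \to \ell(\xi)^{-n}$ uniformly on $\s^{n-1}$, giving
\[
\int_{\s^{n-1}} \Psi_\xi(u_k)^{-n}\, d\xi \longrightarrow \int_{\s^{n-1}} \ell(\xi)^{-n}\, d\xi \leq \int_{\s^{n-1}} \Psi_\xi(u_0)^{-n}\, d\xi.
\]
Applying $\alpha_n\,(\cdot)^{-1/n}$ reverses the final inequality and produces ${\cal E}_{\R^n}(u_k) \to L \geq {\cal E}_{\R^n}(u_0)$ along the extracted subsequence. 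Running this extraction inside an arbitrary subsequence of $\{u_k\}$ promotes the conclusion to $\liminf_{k} {\cal E}_{\R^n}(u_k) \geq {\cal E}_{\R^n}(u_0)$.

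I expect the main obstacle to be exactly the uniform positivity of $\xi \mapsto \Psi_\xi(u_0)$: without it, the spherical integral of $\Psi_\xi(u_k)^{-n}$ could concentrate on small sets where $\Psi_\xi$ degenerates, and no reverse Fatou argument is at hand. It is here that Theorem \ref{T7} plays its indispensable role, by excluding the vanishing of $\Psi_\xi(u_0)$ along any single direction once ${\cal E}_{\R^n}(u_0) > 0$.
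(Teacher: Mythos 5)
Your proof is correct and follows essentially the same route as the paper's: slicewise lower semicontinuity of $\Psi_\xi$ via the Goffman--Serrin theorem, Theorem \ref{T7} to dispose of the degenerate case and to secure the uniform positive lower bound $\Psi_\xi(u_0)\geq c$, and the equi-Lipschitz estimate $|\Psi_\xi(u)-\Psi_{\xi'}(u)|\leq|\xi-\xi'|\,|Du|(\R^n)$ to control the dependence on $\xi$. The only difference is in the final limit interchange: where the paper proves the uniform bound $\Psi_\xi(u_k)\geq c_0$ by a compactness--contradiction argument and then applies a reverse Fatou inequality (the integrands being dominated by $c_0^{-n}$ on the sphere), you extract a uniformly convergent subsequence by Arzel\`a--Ascoli and pass to the limit directly inside the spherical integral; both are valid.
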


\begin{proof}
Let $u_k$ be a sequence converging strongly to $u_0$ in $L^1_{loc}(\R^n)$ such that $|D u_k|(\R^n)$ is bounded. If $\Psi_{\tilde{\xi}}(u_0) = 0$ for some $\tilde{\xi} \in \s^{n-1}$, by Theorem \ref{T7}, we have ${\cal E}_{\R^n}(u_0) = 0$ and the conclusion follows trivially.

It then suffices to assume that $\Psi_{\xi}(u_0) > 0$ for all $\xi \in \s^{n-1}$. Set $f^\xi(w)= |w \cdot \xi|$ for any $\xi \in \s^{n-1}$. Since $f^\xi$ is convex, nonnegative, $1$-homogeneous and $f^\xi_\infty = f^\xi$, we have

\begin{eqnarray*}
\Psi_\xi(u) &=& \int_{\R^n} | \sigma_u(x) \cdot \xi |\, d(|Du|)(x)\\
&=& \int_{\R^n} f^\xi(\frac{\nabla u(x)}{|\nabla u(x)|})\, |\nabla u(x)|\, dx + \int_{\R^n} f^\xi(\sigma_u^s(x))\, d(|D^s u|)(x)\\
&=& \int_{\R^n} f^\xi(\nabla u(x))\, dx + \int_{\R^n} f^\xi_\infty(\sigma_u^s(x))\, d(|D^s u|)(x).
\end{eqnarray*}
Hence, by Proposition \ref{P1}, $\Psi_\xi$ is strongly lower semicontinuous on $L_{loc}^1(\R^n)$, and so

\begin{equation} \label{linf}
\int_{\R^n} | \sigma_{u_0}(x) \cdot \xi |\, d(|Du_0|)(x) \leq \liminf_{k \rightarrow \infty} \int_{\R^n} | \sigma_{u_k}(x) \cdot \xi |\, d(|Du_k|)(x).
\end{equation}
We now ensure the existence of a constant $c_0 > 0$ and an integer $k_0 \in \N$, both independent of $\xi \in \s^{n-1}$, such that, for any $k \geq k_0$,  

\begin{equation} \label{loweri}
\int_{\R^n} | \sigma_{u_k}(x) \cdot \xi |\, d(|D u_k|)(x) \geq c_0.
\end{equation}
Otherwise, module a renaming of indexes, we get a sequence $\xi_k \in \s^{n-1}$ such that $\xi_k \rightarrow \tilde{\xi}$ and

\[
\int_{\R^n} | \sigma_{u_k}(x) \cdot \xi_k |\, d(|D u_k|)(x) \leq \frac{1}{k}.
\]
Using the assumption that $|D u_k|(\R^n)$ is bounded, we find a constant $C_1 > 0$ such that

\[
\int_{\R^n} | \sigma_{u_k}(x) \cdot \tilde{\xi} |\, d(|D u_k|)(x) \leq C_1 \Vert \xi_k - \tilde{\xi} \Vert + \frac{1}{k} \rightarrow 0.
\]
Therefore, by \eqref{linf}, we obtain the contradiction $\Psi_{\tilde{\xi}}(u_0) = 0$.

Finally, combining \eqref{linf}, \eqref{loweri} and Fatou's lemma, we derive

\begin{eqnarray*}
&& \int_{\s^{n-1}} \left( \int_{\R^n} | \sigma_{u_0}(x) \cdot \xi |\, d(|Du_0|)(x) \right)^{-n} d\xi \\
&\geq& \int_{\s^{n-1}} \limsup_{k \rightarrow \infty} \left( \int_{\R^n} | \sigma_{u_k}(x) \cdot \xi |\, d(|Du_k|)(x) \right)^{-n} d\xi \\
&\geq& \limsup_{k \rightarrow \infty} \int_{\s^{n-1}} \left( \int_{\R^n} | \sigma_{u_k}(x) \cdot \xi |\, d(|Du_k|)(x) \right)^{-n} d\xi,
\end{eqnarray*}
and thus

\begin{eqnarray*}
{\cal E}_{\R^n}(u_0) &=& \left( \int_{\s^{n-1}} \left( \int_{\R^n} | \sigma_{u_0}(x) \cdot \xi |\, d(|Du_0|)(x) \right)^{-n} d\xi \right)^{-\frac{1}{n}} \\
&\leq& \liminf_{k \rightarrow \infty} \left( \int_{\s^{n-1}} \left( \int_{\R^n} | \sigma_{u_k}(x) \cdot \xi |\, d(|Du_k|)(x) \right)^{-n} d\xi \right)^{-\frac{1}{n}} \\
&=& \liminf_{k \rightarrow \infty} {\cal E}_{\R^n}(u_k).
\end{eqnarray*}
\end{proof}

As an immediate consequence of Theorem \ref{T8}, we have:

\begin{cor} \label{C2}
If $u_k \rightharpoonup u_0$ weakly* in $BV(\Omega)$, then
\[
{\cal E}_{\R^n}(\bar{u}_0) \leq \liminf_{k \rightarrow \infty} {\cal E}_{\R^n}(\bar{u}_k).
\]
\end{cor}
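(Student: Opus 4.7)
The plan is to reduce Corollary \ref{C2} to a direct invocation of Theorem \ref{T8} applied to the zero extensions $\bar{u}_k$ and $\bar{u}_0$. Two things need to be verified: strong $L^1_{loc}(\R^n)$ convergence $\bar{u}_k \to \bar{u}_0$, and uniform boundedness of $|D\bar{u}_k|(\R^n)$.

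First, since $u_k \rightharpoonup u_0$ weakly* in $BV(\Omega)$, by definition $u_k \to u_0$ strongly in $L^1(\Omega)$. As $\bar{u}_k - \bar{u}_0$ vanishes outside $\Omega$,
\[
\Vert \bar{u}_k - \bar{u}_0 \Vert_{L^1(\R^n)} = \Vert u_k - u_0 \Vert_{L^1(\Omega)} \longrightarrow 0,
\]
so in particular $\bar{u}_k \to \bar{u}_0$ strongly in $L^1_{loc}(\R^n)$.

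Next, every weakly* convergent sequence in $BV(\Omega)$ is bounded in the usual norm $\Vert \cdot \Vert_{BV(\Omega)}$; by the equivalence between $\Vert \cdot \Vert_{BV(\Omega)}$ and the norm $\Vert u \Vert'_{BV(\Omega)} = |Du|(\Omega) + \Vert \tilde{u} \Vert_{L^1(\partial \Omega)}$ listed in Section 2, the sequence $\{u_k\}$ is also bounded in the primed norm. Therefore, invoking the extension identity \eqref{ebv},
\[
|D \bar{u}_k|(\R^n) = |Du_k|(\Omega) + \Vert \tilde{u}_k \Vert_{L^1(\partial \Omega)} = \Vert u_k \Vert'_{BV(\Omega)}
\]
is uniformly bounded in $k$.

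With these two ingredients established, Theorem \ref{T8} applies to the sequence $\bar{u}_k$ and yields
\[
{\cal E}_{\R^n}(\bar{u}_0) \leq \liminf_{k \to \infty} {\cal E}_{\R^n}(\bar{u}_k),
\]
which is the desired conclusion. No step here is substantive on its own: the only point that requires care is the boundedness of the full $\R^n$-total variation of the extensions, which could fail to be controlled by $|Du_k|(\Omega)$ alone but is handled by the norm equivalence recalled in Section 2.
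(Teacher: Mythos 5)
Your proof is correct and follows exactly the route the paper intends: the paper states Corollary \ref{C2} as an immediate consequence of Theorem \ref{T8}, and your verification of the two hypotheses (strong $L^1_{loc}$ convergence of the extensions and uniform boundedness of $|D\bar{u}_k|(\R^n)$ via \eqref{ebv} and the equivalence of $\Vert\cdot\Vert_{BV(\Omega)}$ with $\Vert\cdot\Vert'_{BV(\Omega)}$) supplies precisely the details the paper leaves implicit.
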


This result is the key point towards the lower weak* semicontinuity of the functional $\Phi_{\cal A}: BV(\Omega) \rightarrow \R$. We recall that

\[
\Phi_{\cal A}(u) = {\cal E}_{\R^n}(\bar{u}) + \int_{\Omega} a |u|\, dx + \int_{\partial \Omega} b |\tilde{u}|\, d{\cal H}^{n-1},
\]
where $a \in L^\infty(\Omega)$ and $b \in L^\infty(\partial \Omega)$ is nonnegative. Since the integral functional on $\Omega$ is clearly weakly* continuous on $BV(\Omega)$, it only remains to discuss the semicontinuity of the boundary integral term.

\begin{propo} \label{P2}
If $u_k \rightharpoonup u_0$ weakly* in $BV(\Omega)$, then

\[
\int_{\partial \Omega} b |\tilde{u}_0|\, d{\cal H}^{n-1} \leq \liminf_{k \rightarrow \infty} \int_{\partial \Omega} b |\tilde{u}_k|\, d{\cal H}^{n-1}.
\]
\end{propo}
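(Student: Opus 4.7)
My plan is to transfer weak* convergence from $BV(\Omega)$ to $BV(\R^n)$ via the zero extensions, apply Reshetnyak's lower semicontinuity theorem to an appropriate continuous weight, and reduce from general $L^\infty$ weights to continuous ones by monotone approximation.

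First, I would reduce to the case where $b$ is continuous. Given $b \in L^\infty(\partial\Omega)$ with $b \geq 0$, one approximates $b$ from below by an increasing sequence $b_j \in C(\partial\Omega)$ with $0 \leq b_j \leq b$ and $b_j \to b$ ${\cal H}^{n-1}$-a.e.~on $\partial\Omega$ (via Lusin's theorem followed by truncation). Monotone convergence gives $\int_{\partial\Omega} b_j |\tilde u_0|\, d{\cal H}^{n-1} \nearrow \int_{\partial\Omega} b|\tilde u_0|\, d{\cal H}^{n-1}$, while $b_j \leq b$ yields $\liminf_k \int b_j|\tilde u_k|\, d{\cal H}^{n-1} \leq \liminf_k \int b|\tilde u_k|\, d{\cal H}^{n-1}$. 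Hence it suffices to establish the inequality for each continuous $b_j$ and then send $j \to \infty$.

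For $b$ continuous and nonnegative, the key point is that weak* convergence transfers to the zero extensions: $\bar u_k \to \bar u_0$ strongly in $L^1(\R^n)$ is immediate, while $|D\bar u_k|(\R^n) = |Du_k|(\Omega) + \Vert \tilde u_k \Vert_{L^1(\partial\Omega)}$ is uniformly bounded by the equivalence between $\Vert \cdot \Vert_{BV(\Omega)}$ and $\Vert \cdot \Vert'_{BV(\Omega)}$ recalled in Section 2. Hence $D\bar u_k \rightharpoonup D\bar u_0$ weakly* as $\R^n$-valued Radon measures on $\R^n$. Extend $b$ to a continuous nonnegative $\phi \in C_c(\R^n)$ with $\phi|_{\partial\Omega} = b$. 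Reshetnyak's lower semicontinuity theorem, applied to the convex, nonnegative and positively $1$-homogeneous integrand $f(x, v) = \phi(x)|v|$, gives
\[
\int_{\R^n} \phi\, d|D\bar u_0| \leq \liminf_{k \to \infty} \int_{\R^n} \phi\, d|D\bar u_k|.
\]
Since $\int_{\R^n} \phi\, d|D\bar u| = \int_\Omega \phi\, d|Du| + \int_{\partial\Omega} b|\tilde u|\, d{\cal H}^{n-1}$, this amounts to
\[
\int_\Omega \phi\, d|Du_0| + \int_{\partial\Omega} b|\tilde u_0|\, d{\cal H}^{n-1} \leq \liminf_k \Bigl( \int_\Omega \phi\, d|Du_k| + \int_{\partial\Omega} b|\tilde u_k|\, d{\cal H}^{n-1} \Bigr).
\]

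The main obstacle will be stripping off the interior contribution $\int_\Omega \phi\, d|Du_k|$ in order to isolate the boundary term. My plan is to refine the choice of $\phi$, taking $\phi_\delta$ supported in a $\delta$-tubular neighborhood of $\partial\Omega$ with $\phi_\delta|_{\partial\Omega} = b$: the term $\int_\Omega \phi_\delta\, d|Du_0|$ then vanishes as $\delta \to 0$ because $|Du_0|$ is a Radon measure on $\Omega$ that does not charge $\partial\Omega$, whereas $\int_\Omega \phi_\delta\, d|Du_k| \leq \Vert b \Vert_\infty |Du_k|(\{d(\cdot,\partial\Omega)<\delta\} \cap \Omega)$ is only uniformly bounded. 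The delicate final step, which I expect to be the main technical point, is a diagonalization in $k$ and $\delta$ together with the observation that any concentration of $|Du_k|$ near $\partial\Omega$ is absorbed into $\liminf_k \int_{\partial\Omega} b|\tilde u_k|\, d{\cal H}^{n-1}$ through the trace-jump structure of $D\bar u_k$ across $\partial\Omega$.
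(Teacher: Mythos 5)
Your reduction to continuous $b$ already has a problem: a nonnegative $b\in L^\infty(\partial\Omega)$ need not admit any nontrivial continuous (or even lower semicontinuous) minorant --- if $\{b>0\}$ is a nowhere dense subset of $\partial\Omega$ of positive ${\cal H}^{n-1}$-measure, every continuous $g\le b$ a.e.\ is nonpositive --- and the $L^1$-approximation alternative is blocked because the traces $\tilde u_k$ are only bounded in $L^1(\partial\Omega)$, with no uniform integrability. But the decisive gap is the step you yourself flag as the main technical point. After localizing the weight to a $\delta$-collar, your Reshetnyak inequality reads
\[
\int_{\partial\Omega} b|\tilde u_0|\,d{\cal H}^{n-1} + \int_{\Omega\cap\{d<\delta\}}\phi_\delta\,d|Du_0| \;\leq\; \liminf_{k\to\infty}\Bigl(\int_{\Omega\cap\{d<\delta\}}\phi_\delta\,d|Du_k| + \int_{\partial\Omega} b|\tilde u_k|\,d{\cal H}^{n-1}\Bigr),
\]
and the interior term on the right cannot be stripped off: mass of $|D\bar u_k|$ sitting just inside $\Omega$ contributes nothing to $\tilde u_k$, yet can converge to mass lying exactly on $\partial\Omega$, where it is recorded by $\tilde u_0$. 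Concretely, let $d(x)=\mathrm{dist}(x,\partial\Omega)$; by the coarea formula one can pick $t_k\downarrow 0$ with ${\cal H}^{n-1}(\{d=t_k\})$ bounded, and set $u_k=\chi_{\{x\in\Omega:\ d(x)>t_k\}}$. Then $u_k\to 1$ in $L^1(\Omega)$ with $|Du_k|(\Omega)$ bounded, so $u_k\rightharpoonup u_0\equiv 1$ weakly* in $BV(\Omega)$, while $\tilde u_k\equiv 0$ and $\tilde u_0\equiv 1$; for $b\equiv 1$ the asserted inequality fails outright. So the "absorption of the collar concentration into $\liminf_k\int_{\partial\Omega}b|\tilde u_k|\,d{\cal H}^{n-1}$" is not a diagonalization issue but a genuine obstruction: the boundary functional alone is not weakly* lower semicontinuous when the weight on $\partial\Omega$ exceeds the weight one is forced to drop in the interior.

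For comparison, the paper argues differently, via the perturbed norm $\Vert u\Vert_\varepsilon=\varepsilon|Du|(\Omega)+\int_{\partial\Omega}(b+\varepsilon)|\tilde u|\,d{\cal H}^{n-1}$ and the claim $\Vert u_0\Vert_\varepsilon\le\liminf_k\Vert u_k\Vert_\varepsilon$ deduced from norm equivalence. Note, however, that equivalence of norms does not confer weak* lower semicontinuity, and the same example above puts that inequality in doubt for small $\varepsilon$: the weight $b+\varepsilon$ on $\partial\Omega$ jumps above the interior weight $\varepsilon$, which is precisely the wrong monotonicity (what is genuinely lower semicontinuous is $|D\bar u|(\R^n)=|Du|(\Omega)+\Vert\tilde u\Vert_{L^1(\partial\Omega)}$, where the interior carries full weight). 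In short, your route and the paper's route stall at the same point, and the example indicates that the difficulty lies with the statement itself rather than with your technique; any rescue must exploit the full interior total variation (or the affine energy of $\bar u$) together with the boundary term, not the boundary term in isolation.
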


\begin{proof}
Let $u_k$ be a sequence converging weakly* to $u_0$ in $BV(\Omega)$. For each $\varepsilon > 0$, we consider the norm $\Vert \cdot \Vert_\varepsilon$ on $BV(\Omega)$,

\[
\Vert u \Vert_\varepsilon = \varepsilon |D u|(\Omega) + \int_{\partial \Omega} (b + \varepsilon) |\tilde{u}|\, d{\cal H}^{n-1}.
\]
Since $b$ is nonnegative, $\Vert \cdot \Vert_\varepsilon$ is equivalent to $\Vert \cdot \Vert_{BV(\Omega)}$ and $\Vert \cdot \Vert'_{BV(\Omega)}$, and so
\begin{equation} \label{pert}
\Vert u_0 \Vert_\varepsilon \leq \liminf_{k \rightarrow \infty} \Vert u_k \Vert_\varepsilon .
\end{equation}
Take a constant $C > 0$ so that $\Vert u_k \Vert'_{BV(\Omega)} \leq C$ and a subsequence $u_{k_j}$ such that
\[
\lim_{j \rightarrow \infty} \int_{\partial \Omega} b |\tilde{u}_{k_j}|\, d{\cal H}^{n-1} = \liminf_{k \rightarrow \infty} \int_{\partial \Omega} b |\tilde{u}_k|\, d{\cal H}^{n-1}.
\]
By \eqref{pert}, for $j$ large, we get
\[
\varepsilon |D u_0|(\Omega) + \int_{\partial \Omega} (b + \varepsilon) |\tilde{u}_0|\, d{\cal H}^{n-1} - \varepsilon \leq C \varepsilon + \int_{\partial \Omega} b |\tilde{u}_{k_j}|\, d{\cal H}^{n-1}.
\]
Letting $j \rightarrow \infty$ and after $\varepsilon \rightarrow 0$, the statement follows as wished.
\end{proof}

Finally, Corollary \ref{C2} and Proposition \ref{P2} lead to

\begin{cor} \label{C3}
The functional $\Phi_{\cal A}$ is lower weakly* semicontinuous on $BV(\Omega)$.
\end{cor}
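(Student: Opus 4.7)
The plan is to simply decompose $\Phi_{\cal A}$ into its three summands and show that each of them is weakly* lower semicontinuous on $BV(\Omega)$, then add. I would fix a sequence $u_k \rightharpoonup u_0$ weakly* in $BV(\Omega)$, which by definition means $u_k \to u_0$ strongly in $L^1(\Omega)$ together with $Du_k \rightharpoonup Du_0$ in the measure sense, and analyze each term on its own.

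For the affine energy term ${\cal E}_{\R^n}(\bar{u})$, Corollary \ref{C2} directly gives
\[
{\cal E}_{\R^n}(\bar{u}_0) \leq \liminf_{k \to \infty} {\cal E}_{\R^n}(\bar{u}_k).
\]
For the interior integral $\int_\Omega a |u|\, dx$, since $a \in L^\infty(\Omega)$ and $u_k \to u_0$ strongly in $L^1(\Omega)$, the inequality $\bigl| |u_k| - |u_0| \bigr| \leq |u_k - u_0|$ together with H\"older's inequality yields in fact equality in the limit, so this term is weakly* continuous; in particular, it is (trivially) lower semicontinuous. For the boundary integral $\int_{\partial\Omega} b |\tilde{u}|\, d{\cal H}^{n-1}$, Proposition \ref{P2} provides exactly
\[
\int_{\partial\Omega} b |\tilde{u}_0|\, d{\cal H}^{n-1} \leq \liminf_{k\to\infty} \int_{\partial\Omega} b |\tilde{u}_k|\, d{\cal H}^{n-1}.
\]

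Summing the three inequalities and using the standard fact that $\liminf(A_k) + \liminf(B_k) + \lim(C_k) \leq \liminf(A_k + B_k + C_k)$, I conclude
\[
\Phi_{\cal A}(u_0) \leq \liminf_{k\to\infty} \Phi_{\cal A}(u_k),
\]
which is the desired weakly* lower semicontinuity. There is no real obstacle here, as all the hard work has already been done in Theorem \ref{T8}, Corollary \ref{C2} and Proposition \ref{P2}; the only minor subtlety worth mentioning explicitly is the superadditivity of $\liminf$, which is what allows one to combine a lower semicontinuity statement for each term into a lower semicontinuity statement for the sum.
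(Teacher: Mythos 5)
Your proposal is correct and matches the paper's argument exactly: the paper likewise treats the interior integral as weakly* continuous via strong $L^1$ convergence, invokes Corollary \ref{C2} for the affine energy and Proposition \ref{P2} for the boundary term, and sums using superadditivity of the $\liminf$. Nothing to add.
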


\section{Subcritical constrained minimizations on $BV(\Omega)$}

We present the proof of Theorems \ref{T1} and \ref{T3}. The main ingredients are Corollary \ref{C3} and the following Rellich-Kondrachov type compactness theorem:

\begin{teor} \label{T9}
The affine ball $B_{\mathcal A}(\Omega)$ is compact in $L^q(\Omega)$ for any $1 \leq q < \frac{n}{n-1}$.
\end{teor}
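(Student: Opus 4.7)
The plan is to start with a sequence $u_k \in B_{\mathcal{A}}(\Omega)$ and extract a subsequence converging in $L^q(\Omega)$ for every $1 \leq q < \frac{n}{n-1}$. The Sobolev--Zhang inequality \eqref{ASob} applied to $\bar{u}_k$ yields
$$
n\omega_n^{1/n} \Vert u_k \Vert_{L^{\frac{n}{n-1}}(\Omega)} \leq {\cal E}_{\R^n}(\bar{u}_k) \leq 1,
$$
so $(u_k)$ is uniformly bounded in $L^{\frac{n}{n-1}}(\Omega)$. In particular it is equi-integrable in $L^q(\Omega)$ for every $q < \frac{n}{n-1}$, so by Vitali's theorem it suffices to produce a subsequence that converges almost everywhere in $\Omega$.

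The main obstruction, exactly the point (P3) flagged in the introduction, is that a bound on ${\cal E}_{\R^n}(\bar{u}_k)$ does not control $|Du_k|(\Omega)$, so the standard Rellich--Kondrachov compactness in $BV(\Omega)$ is unavailable. I would remove this obstruction via the affine normalization of Huang and Li \cite{HL}: for each $k$ one can choose $T_k \in SL(n)$ so that
$$
|D(\bar{u}_k \circ T_k)|(\R^n) \leq C\, {\cal E}_{\R^n}(\bar{u}_k \circ T_k) = C\, {\cal E}_{\R^n}(\bar{u}_k) \leq C,
$$
where the equality uses the $SL(n)$-invariance of the affine energy stated in the introduction. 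Setting $v_k = \bar{u}_k \circ T_k$ and exploiting $|\det T_k|=1$, one also has $\Vert v_k \Vert_{L^1(\R^n)} = \Vert u_k \Vert_{L^1(\Omega)} \leq 1$, so $(v_k)$ is bounded in $BV(\R^n)$ with supports in $T_k^{-1}(\Omega)$.

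The hardest step will be transferring $L^1$-compactness of $(v_k)$ back to a.e.\ convergence of $(u_k)$ on the fixed domain $\Omega$. Since $T_k^{-1}(\Omega)$ has volume $|\Omega|$ but could a priori become arbitrarily elongated in $\R^n$ as $k\to\infty$, one must rule out such degeneracy of the normalizing transforms; this is the role of Proposition \ref{P3}. The expected outcome is that, up to passing to a subsequence and allowing a translation, the matrices $T_k$ remain in a compact subset of $SL(n)$ and hence converge, $T_k \to T_\infty$. Once this is secured, applying the $BV$ Rellich--Kondrachov theorem to $(v_k)$ on any fixed large ball produces a subsequence with $v_k \to v$ in $L^1_{loc}(\R^n)$, hence pointwise almost everywhere along a further subsequence. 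Undoing the change of variables, $u_k(x) = v_k(T_k^{-1}x)$ converges almost everywhere on $\Omega$, and combining with the equi-integrability from the first paragraph yields strong convergence in $L^q(\Omega)$ for every $1 \leq q < \frac{n}{n-1}$, proving the compactness of $B_{\mathcal{A}}(\Omega)$.
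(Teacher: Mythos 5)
Your overall strategy (Huang--Li normalization $v_k=\bar{u}_k\circ T_k$, then Rellich--Kondrachov for $v_k$, then transfer back to $u_k$) is the paper's strategy, but there is a genuine gap at the step you yourself identify as the hardest one. You assert that, up to a subsequence and a translation, the matrices $T_k$ remain in a compact subset of $SL(n)$, and you suggest that Proposition \ref{P3} is what rules out their degeneracy. Neither is true. Point (P3) of the introduction (and the example on page 17 of \cite{HJM5}) exhibits sequences in $B_{\mathcal A}(\Omega)$ whose total variation $|Du_k|(\Omega)$ is unbounded; for such sequences the normalizing matrices necessarily satisfy $|T_k|\to\infty$, since otherwise $|D\bar{u}_k|(\R^n)\leq C\,|D(\bar{u}_k\circ T_k)|(\R^n)$ would be bounded. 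So the degenerate case cannot be excluded, and your argument as written does not cover it: if $|T_k|\to\infty$ you cannot pass to a convergent $T_\infty$ and undo the change of variables.

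The paper's proof instead splits into two cases. When $|T_k|\not\to\infty$ it runs essentially as you describe. When $|T_k|\to\infty$, it shows that for every sequence of translations $y_k$ the set $\liminf_k T_k^{-1}(\Omega+T_k(y_k))$ has zero Lebesgue measure, hence every local weak* limit of $v_k(\cdot-y_k)$ vanishes; Proposition \ref{P3} is a cocompactness statement whose role is precisely to convert this vanishing of all translated weak* limits into $u_k\to 0$ strongly in $L^q(\R^n)$ for $1<q<\frac{n}{n-1}$ (and then in $L^q(\Omega)$ for all $1\leq q<\frac{n}{n-1}$ by H\"older, since $\Omega$ is bounded). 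Thus in the degenerate case the subsequence still converges --- to $0$ --- which is what saves compactness. You need to add this dichotomy and the vanishing argument; the rest of your outline (the $L^{\frac{n}{n-1}}$ bound from Sobolev--Zhang, equi-integrability, Vitali) is fine and is a harmless variant of the paper's direct $L^q$ estimates.
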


Its proof demands in turn two preliminary results. The first of them relates weak* convergence of displacements of bounded sequences in $BV(\R^n)$ and strong convergence in $L^q(\R^n)$. Similar results have been established in other spaces, we refer to \cite{AT, AT1, T1, T2} where cocompactness of embeddings are studied in depth. We give the proof for the sake of completeness.

\begin{propo} \label{P3}
Let $u_k$ be a bounded sequence in $BV(\R^n)$. Then, $u_k(\cdot - y_k) \rightharpoonup 0$ locally weakly* in $BV(\R^n)$ for any sequence $y_k$ in $\R^n$ if, and only if, $u_k \rightarrow 0$ strongly in $L^q(\R^n)$ for any $1 < q < \frac{n}{n-1}$.
\end{propo}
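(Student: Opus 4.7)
The plan is to prove both implications separately, with the reverse direction being straightforward and the forward direction resting on a Lions-type vanishing lemma for $BV$.

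For the reverse implication, suppose $u_k \to 0$ strongly in $L^q(\R^n)$ for every $1<q<\frac{n}{n-1}$. Given a sequence $y_k \in \R^n$, set $v_k(x) = u_k(x-y_k)$. For each fixed $R>0$, H\"older's inequality applied on $B_R$ gives
\[
\|v_k\|_{L^1(B_R)} = \|u_k\|_{L^1(B_R(-y_k))} \le |B_R|^{1-1/q}\|u_k\|_{L^q(\R^n)} \longrightarrow 0,
\]
so $v_k \to 0$ in $L^1_{loc}(\R^n)$. Since translations preserve total variation, $|Dv_k|(\R^n) = |Du_k|(\R^n)$ is bounded; therefore $v_k$ is bounded in $BV(B_R)$ for every $R$. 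By the local $BV$ weak* compactness, any subsequence admits a further subsequence converging locally weakly* to some $v$, which is forced to equal $0$ by the $L^1_{loc}$ limit. Hence the whole sequence $v_k \rightharpoonup 0$ locally weakly* in $BV(\R^n)$.

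For the forward implication, I would first establish a vanishing lemma in the spirit of P.-L. Lions: \emph{if $u_k$ is bounded in $BV(\R^n)$ and $\sup_{y\in\R^n}\int_{B_1(y)}|u_k|\,dx \to 0$, then $u_k\to 0$ in $L^q(\R^n)$ for every $1<q<\frac{n}{n-1}$.} The argument uses a locally finite covering of $\R^n$ by unit balls $\{B_1(x_j)\}$ with bounded overlap $N$, the interpolation inequality
\[
\|u\|_{L^q(B_1(x_j))}^q \le \|u\|_{L^1(B_1(x_j))}^{q\theta}\,\|u\|_{L^{n/(n-1)}(B_1(x_j))}^{q(1-\theta)}
\]
with $1/q=\theta+(1-\theta)(n-1)/n$, and a H\"older split introducing a small parameter $\epsilon>0$ so that $(\sup_j\|u\|_{L^1(B_1(x_j))})^{q\theta-\epsilon}$ factors out; what remains sums, via the bounded overlap, to a bounded multiple of $\|u\|_{L^1(\R^n)}^\epsilon$ and $\|u\|_{L^{n/(n-1)}(\R^n)}$ (the latter controlled by the $BV$ bound through the Sobolev-Zhang-type embedding).

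Granting this lemma, I argue by contradiction. Suppose $u_k \not\to 0$ in some $L^q$ with $1<q<\frac{n}{n-1}$; after extracting, $\|u_k\|_{L^q(\R^n)} \ge \delta>0$. By the contrapositive of the vanishing lemma, $\sup_{y}\int_{B_1(y)}|u_k|\,dx$ does not tend to zero, so after a further extraction there exist $y_k\in\R^n$ and $\alpha>0$ with $\int_{B_1(y_k)}|u_k|\,dx \ge \alpha$. Setting $v_k(x)=u_k(x+y_k)$, the hypothesis (applied to the sequence $-y_k$) gives $v_k \rightharpoonup 0$ locally weakly* in $BV(\R^n)$; in particular $v_k\to 0$ in $L^1(B_1)$ by the Rellich-Kondrachov theorem for $BV$. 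But $\int_{B_1}|v_k|\,dx = \int_{B_1(y_k)}|u_k|\,dx \ge \alpha>0$, a contradiction.

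The main obstacle is the vanishing lemma itself: although standard for $W^{1,p}$, one must verify that the exponent bookkeeping works at the $BV$ endpoint, where one interpolates between $L^1$ (small) and $L^{n/(n-1)}$ (bounded via the $BV$ embedding), and choose the auxiliary H\"older exponent $\epsilon$ carefully so that both the summability over the covering and the vanishing of the prefactor hold simultaneously for each $q$ in the subcritical range.
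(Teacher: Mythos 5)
Your overall architecture is sound and close to the paper's: the reverse implication is handled in essentially the same way, and the forward implication via a Lions-type vanishing lemma plus a contradiction using translated Rellich--Kondrachov is a repackaging of what the paper does. (The paper proves the needed inequality inline: it tiles $\R^n$ by integer translates of the unit cube $Q$, uses the local Sobolev bound in the form $\Vert u\Vert_{L^q(Q)}^q \le C\Vert u\Vert_{BV(Q)}\Vert u\Vert_{L^q(Q)}^{q-1}$ so that the local $BV$ norms sum to the global one while a factor $\sup_{y\in\Z^n}\Vert u_k\Vert_{L^q(Q+y)}^{q-1}$ is extracted, and then kills that supremum with the hypothesis.)

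However, the one step you yourself flag as the crux --- the exponent bookkeeping in the vanishing lemma --- genuinely fails as you set it up. With $\frac1q=\theta+(1-\theta)\frac{n-1}{n}$ one has $q\theta=n-(n-1)q$ and $q(1-\theta)=n(q-1)$, so after pulling out $\bigl(\sup_j\Vert u\Vert_{L^1(B_1(x_j))}\bigr)^{q\theta-\epsilon}$ you must sum $\Vert u\Vert_{L^1(B_1(x_j))}^{\epsilon}\,\Vert u\Vert_{L^{n/(n-1)}(B_1(x_j))}^{n(q-1)}$ over $j$. H\"older over $j$ with exponents $1/\epsilon$ and $1/(1-\epsilon)$ controls this by the global norms only if the resulting exponent $n(q-1)/(1-\epsilon)$ on the local $L^{n/(n-1)}$ norms is at least $n/(n-1)$ (these local norms are additive over the covering only at that power), i.e. only if $\epsilon\ge 1-(n-1)(q-1)=q\theta$; but the prefactor vanishes only if $\epsilon<q\theta$. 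The two requirements are incompatible for every $q\in(1,\frac{n}{n-1})$, so no choice of $\epsilon$ closes the argument. The repair is to abandon the local $L^{n/(n-1)}$ norms as the second factor: bound them first by $C\Vert u\Vert_{BV(B_1(x_j))}$, which is additive at power one over a bounded-overlap covering. The exponents then total $q\theta+n(q-1)=q>1$, leaving a surplus $q-1>0$ to extract as $\bigl(\sup_j\Vert u\Vert_{L^1(B_1(x_j))}\bigr)^{q-1}$, while the remainder is summed by H\"older against $\sum_j\Vert u\Vert_{L^1(B_1(x_j))}$ and $\sum_j\Vert u\Vert_{BV(B_1(x_j))}$ (for $n(q-1)\ge1$ no split is needed at all, and the remaining $q$ follow by interpolation in $q$). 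With that repair --- or by adopting the paper's cube decomposition directly --- your argument goes through.
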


\begin{proof}
Assume first that $u_k \rightarrow 0$ strongly in $L^q(\R^n)$ for some $1 < q < \frac{n}{n-1}$. If $v_k = u_k(\cdot - y_k)$ doesn't converge locally weakly* to zero in $BV(\R^n)$ for some sequence $y_k$ in $\R^n$, then there is a bounded open subset $\Omega$ of $\R^n$ and $\varepsilon > 0$ such that, module a subsequence, $\Vert v_k \Vert_{L^1(\Omega)} \geq \varepsilon$ or $|dv_k(\varphi)| \geq \varepsilon$ for some $\varphi \in C^\infty_0(\Omega)$, where $dv(\varphi) = \int_{\Omega} \varphi dv$. Since $v_k$ is bounded in $BV(\R^n)$, one may assume that $v_k \rightharpoonup v$ weakly* in $BV(\Omega)$. Thus, letting $k \rightarrow \infty$ in the two cases, one gets $\Vert v \Vert_{L^1(\Omega)} \geq \varepsilon$ or $|dv(\varphi)| \geq \varepsilon$. On the other hand, one knows that $v_k \rightarrow 0$ strongly in $L^q(\R^n)$ and $v_k \rightarrow v$ strongly in $L^1(\Omega)$, so $v = 0$ in $\Omega$. But this contradicts the last two inequalities.

Conversely, assume that $u_k(\cdot - y_k) \rightharpoonup 0$ locally  weakly* in $BV(\R^n)$ for any sequence $y_k$ in $\R^n$. Choose a fixed $1 < q < \frac{n}{n-1}$ and consider the $n$-cube $Q = (0,1)^n$.

Using the continuity of the Sobolev immersion $BV(Q) \hookrightarrow L^q(Q)$, we deduce that
\begin{eqnarray*}
\int_{Q + y} |u_k|^q\, dx &=& \int_Q |u_k(x - y)|^q\, dx \\
&\leq&  C \Vert u_k(\cdot\, - y) \Vert_{BV(Q)} \left( \int_Q |u_k(x - y)|^q\, dx \right)^{1 - \frac{1}{q}} \\
&=& C \Vert u_k \Vert_{BV(Q + y)} \left( \int_Q |u_k(x - y)|^q\, dx \right)^{1 - \frac{1}{q}}
\end{eqnarray*}
for every $y \in \R^n$, where $C$ is a constant independent of $y$. 

By adding the inequality over $y \in \Z^n$, we obtain
\begin{equation} \label{3}
\int_{\R^n} |u_k|^q\, dx \leq C \Vert u_k \Vert_{BV(\R^n)} \sup_{y \in \Z^n} \left( \int_Q |u_k(x - y)|^q\, dx \right)^{1 - \frac{1}{q}}.
\end{equation}
We claim that the right-hand side of \eqref{3} is finite. Since $u_k$ is bounded in $BV(\R^n)$, it is also bounded in $L^1(\R^n)$ and in $L^{\frac{n}{n-1}}(\R^n)$ by Sobolev inequality. Then, the finiteness follows from the assumption $1 < q < \frac{n}{n-1}$ and a simple interpolation.

Choose $y_k \in \Z^n$ so that
\[
\left( \int_Q |u_k(x - y_k)|^q\, dx \right)^{1 - \frac{1}{q}} \geq \frac{1}{2} \sup_{y \in \Z^n} \left( \int_Q |u_k(x - y)|^q\, dx \right)^{1 - \frac{1}{q}}.
\]
Hence, \eqref{3} gives

\begin{equation} \label{4}
\int_{\R^n} |u_k|^q\, dx \leq 2 C_1 \left( \int_Q |u_k(x - y_k)|^q\, dx \right)^{1 - \frac{1}{q}}
\end{equation}
for some constant $C_1$ independent of $k$.

On the other hand, the strict inequality $q < \frac{n}{n-1}$ allows us to apply the Rellich-Kondrachov compactness theorem to the embedding $BV(Q) \hookrightarrow L^{q}(Q)$ in order to estimate the right-hand side of \eqref{4}. In fact, module a subsequence, we have $v_k = u_k(\cdot - y_k) \rightarrow v$ strongly in $L^q(Q)$. But, by assumption, $v_k \rightharpoonup 0$ locally weakly* in $BV(\R^n)$, and so $v_k \rightarrow 0$ strongly in $L^1(Q)$. Therefore, $v = 0$ in $Q$ and, since $q > 1$, we deduce from \eqref{4} that $u_k \rightarrow 0$ strongly in $L^q(\R^n)$.
\end{proof}

As noted in the introduction, exist no upper bound for $|Du|(\R^n)$ in terms of ${\cal E}_{\R^n} u$ on $BV(\R^n)$. Nonetheless, Huang and Li (Theorem 1.2 of \cite{HL}) proved that such an estimate holds true for functions $u \in W^{1,1}(\R^n)$ unless an adequate affine transformation $T$ depending on $u$. The result is also valid in $BV(\R^n)$, thanks to the necessary tools that were extended by Wang in \cite{Wa}.

\begin{propo}[Huang-Li Theorem] \label{P4}
For any $u \in BV(\R^n)$, one has
\[
d_0 \min_{T \in SL(n)} \left| D(u \circ T) \right|(\R^n) \leq {\cal E}_{\R^n} u,
\]
where $d_0 = 4^{-1} \pi \Gamma(\frac{n+1}2) \Gamma(n + 1)^{\frac 1n} \Gamma(\frac n2 + 1)^{-\frac 1n - 1}$.
\end{propo}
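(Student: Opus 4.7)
The plan is to reproduce, in the $BV$ setting, the argument of Huang and Li (Theorem 1.2 of \cite{HL}) for $W^{1,1}$ functions, relying on the fact that the $L^1$ Brunn--Minkowski / LYZ ingredients employed in \cite{HL} have been extended from $W^{1,1}(\R^n)$ to $BV(\R^n)$ by Wang \cite{Wa}.

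The starting point would be to attach a convex body to $u \in BV(\R^n)$. By Theorem \ref{T7} I may restrict attention to $\mathcal{E}_{\R^n}(u) > 0$, so that $h_u(\xi) := \Psi_\xi(u)$ is strictly positive on $\s^{n-1}$. Since $h_u$ is even, sublinear and $1$-homogeneous, it is the support function of an origin-symmetric convex body $L_u \subset \R^n$ with $0$ in its interior. Two elementary computations then provide the dictionary that drives the whole proof. Fubini together with the identity $\int_{\s^{n-1}} |\xi \cdot v|\, d\xi = 2\omega_{n-1}|v|$ gives
\[
|Du|(\R^n) = \frac{1}{2\omega_{n-1}} \int_{\s^{n-1}} h_u(\xi)\, d\xi,
\]
a constant multiple of the mean width of $L_u$; the polar-coordinate formula for volume yields
\[
\mathcal{E}_{\R^n}(u) = \alpha_n\, (n\,|L_u^*|)^{-1/n},
\]
where $L_u^*$ denotes the polar body of $L_u$; and a direct change of variables shows $L_{u \circ T} = T^t L_u$ for every $T \in SL(n)$, so replacing $u$ by $u \circ T$ acts on $L_u$ by a unimodular linear map and preserves both $|L_u|$ and $|L_u^*|$.

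With this dictionary in hand the proposition reduces to the following geometric claim: for every origin-symmetric convex body $L \subset \R^n$ there exists $S \in SL(n)$ such that the mean width of $SL$ is controlled by a constant multiple of $|L^*|^{-1/n}$. I would prove this by choosing $S$ via John's theorem so that $K := SL$ has the unit ball as its John (maximum-volume inscribed) ellipsoid; for origin-symmetric bodies this forces $B \subseteq K \subseteq \sqrt{n}\, B$. The upper inclusion bounds $w(K) \leq 2\sqrt{n}$, the lower inclusion forces $K^* \subseteq B$ and hence $|L^*|^{-1/n} = |K^*|^{-1/n} \geq \omega_n^{-1/n}$, and combining these yields the geometric claim with an explicit (though non-sharp) constant. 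Pushing the bound back through the dictionary produces the stated inequality with an explicit $d_0$; obtaining the exact value $d_0 = 4^{-1}\pi\,\Gamma(\tfrac{n+1}{2})\,\Gamma(n+1)^{1/n}\,\Gamma(\tfrac{n}{2}+1)^{-1/n-1}$ requires refining the last step with an integral-geometric inequality tailored to $L_u$, essentially the Petty projection inequality as exploited in \cite{HL}.

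The main technical obstacle is verifying that every step of this program survives the passage from the smooth gradient measure $|\nabla u|\,dx$ to the Radon vector measure $\sigma_u\,|Du|$: the polar/radial identity for $\int h_u^{-n}\,d\xi$, the realization of $L_u$ as the LYZ body via the $L^1$-Minkowski problem, and the Petty-type projection inequalities must all remain valid for $u \in BV$. This is precisely the content of Wang's extensions in \cite{Wa}, and with them the Huang--Li argument transcribes verbatim to the $BV$ setting.
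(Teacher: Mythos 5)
The paper does not actually prove this proposition: it is imported wholesale as Theorem 1.2 of \cite{HL}, with a one-sentence remark that the passage from $W^{1,1}(\R^n)$ to $BV(\R^n)$ is covered by the tools extended in \cite{Wa}. So there is no in-paper argument to match your sketch against; what you have written is a reconstruction of the cited proof. Your dictionary is correct and worth recording: $h_u(\xi)=\Psi_\xi(u)$ is even, convex and $1$-homogeneous, hence the support function of an origin-symmetric body $L_u$ (with nonempty interior exactly when ${\cal E}_{\R^n}(u)>0$, by Theorem \ref{T7}); Fubini with $\int_{\s^{n-1}}|\xi\cdot v|\,d\xi=2\omega_{n-1}|v|$ and $|\sigma_u|=1$ $|Du|$-a.e.\ gives $|Du|(\R^n)=(2\omega_{n-1})^{-1}\int_{\s^{n-1}}h_u$; the polar-coordinate identity gives ${\cal E}_{\R^n}(u)=\alpha_n(n|L_u^\ast|)^{-1/n}$; and $L_{u\circ T}=T^tL_u$. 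Note that all of this goes through verbatim for $u\in BV$ because $L_u$ is \emph{defined} by its support function -- you do not need the $L^1$-Minkowski problem or Wang's LYZ-body construction for this part, only for the sharp-constant refinement. The John-position step is also sound: normalizing so that the John ellipsoid of $SL_u$ is a ball of radius $r$ gives $h_{SL_u}\le\sqrt{n}\,r$ and $(SL_u)^\ast\subseteq r^{-1}B$, and chasing the constants through your dictionary yields the inequality with $n^{-1/2}$ in place of $d_0$.

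That last point is the genuine gap relative to the statement as written. The constant you obtain from John's theorem is $n^{-1/2}$, which tends to $0$, whereas the stated $d_0=4^{-1}\pi\,\Gamma(\frac{n+1}{2})\Gamma(n+1)^{1/n}\Gamma(\frac n2+1)^{-1/n-1}$ is bounded away from $0$ (it tends to $\pi/(2\sqrt{e})$); so your argument proves a strictly weaker inequality and does not imply the proposition with its stated constant -- you acknowledge this but leave the refinement entirely to \cite{HL}. Two mitigating remarks: first, the only place the paper uses Proposition \ref{P4} is the proof of Theorem \ref{T9}, where any positive constant suffices, so your weaker version would serve the paper's purposes; second, the ``$\min$'' in the statement only needs to be read as an infimum for the inequality to make sense, and your argument exhibits one explicit $T$, which is all that is required. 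If you want the actual $d_0$, you must identify $L_u$ (up to a factor) with the projection body of the LYZ body $\langle u\rangle$ -- this is where Wang's $BV$ extension genuinely enters -- and replace the John-ellipsoid estimate by the integral-geometric inequalities of \cite{HL}; the Petty projection inequality itself is a statement about convex bodies and needs no extension.
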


\begin{proof}[Proof of Theorem \ref{T9}]
Let $u_k$ be a sequence in $B_{\mathcal A}(\Omega)$. By Proposition \ref{P4}, there is a matrix $T_k\in SL(n)$ such that $d_0 |D (\bar{u}_k \circ T_k)|(\R^n) \leq {\cal E}_{\R^n} \bar{u}_k$. Note also that $\Vert \bar{u}_k \circ T_k \Vert_{L^1(\R^n)} = \Vert u_k \Vert_{L^1(\Omega)}$, so $v_k = \bar{u}_k \circ T_k$ is bounded in $BV(\R^n)$. We now analyze two possibilities.

Assume first that $\vert T_k\vert\rightarrow\infty$. Let $y_k$ be an arbitrary sequence in $\R^n$. The boundedness of $v_k(\cdot - y_k)$ in $BV(\R^n)$ implies, module a subsequence, that $v_k(\cdot - y_k) \rightharpoonup \bar{v}$ locally weakly* in $BV(\R^n)$. Since $q < \frac{n}{n-1}$, the Rellich-Kondrachov compactness theorem also gives $v_k(\cdot - y_k) \rightarrow \bar{v}$ strongly in $L^q_{loc}(\R^n)$ and $v_k(x - y_k) \rightarrow \bar{v}(x)$ almost everywhere in $\R^n$, up to a subsequence.

Consider the set
\[
X=\lim \inf T_k^{-1}(\Omega + T_k(y_k)) = \bigcup_{m \geq 1} \bigcap_{k \geq m} T_k^{-1}(\Omega + T_k(y_k)).
\]
Since $\vert T_k\vert\rightarrow\infty$ and $\Omega$ is bounded, $X$ has zero Lebesgue measure (e.g page 7 of \cite{ST}). For $x \not\in X$, we have $x\not\in \cap_{k\geq m} T_k^{-1}(\Omega + T_k(y_k))$ for any $m \geq 1$, which yields $T_k(x - y_k) \not\in\Omega$ for every $k$, up to a subsequence. Thus, $\bar{v}(x) = \lim\limits_{k \rightarrow \infty} v_k(x - y_k) = \lim\limits_{k \rightarrow \infty} \bar{u}_k(T_k(x - y_k))= 0$ and hence $v_k(\cdot - y_k) \rightharpoonup 0$ locally weakly* in $BV(\R^n)$ for any sequence $y_k$ in $\R^n$. By Proposition \ref{P3}, $\bar{u}_k \rightarrow 0$ strongly in $L^q(\R^n)$ and so $u_k \rightarrow 0$ strongly in $L^q(\Omega)$.

If $|T_k| \not\rightarrow \infty$, then one may assume that $T_k$ converges to some $T\in SL(n)$. Choose $R > 0$ large enough so that $T^{-1}(\Omega) \subset B_R$ and $T_k^{-1}(\Omega) \subset B_R$ for every $k$. Module a subsequence, we know that $v_k \rightharpoonup v_0$ weakly* in $BV(B_R)$ and $v_k \rightarrow v_0$ strongly in $L^q(B_R)$.

Set $u_0 = v_0 \circ T^{-1}$ in $\Omega$. Notice that $u_0 \in BV(\Omega)$ once $T^{-1}(\Omega) \subset B_R$. Let $\bar{u}_0 \in BV(\R^n)$ be the extension of $u_0$ by zero outside of $\Omega$. Since $T \circ T_k^{-1}$ converges to the identity $I$, by the generalized dominated convergence theorem, it follows that $\Vert \bar{u}_0 \circ T \circ T_k^{-1} - u_0 \Vert_{L^q(\Omega)} \rightarrow 0$. Consequently, since $T_k^{-1}(\Omega) \subset B_R$, we have
\begin{eqnarray*}
\Vert u_k - u_0 \Vert_{L^q(\Omega)} &\leq& \Vert v_k \circ T_k^{-1} - v_0 \circ T_k^{-1} \Vert_{L^q(\Omega)} + \Vert \bar{u}_0 \circ T \circ T_k^{-1} - u_0 \Vert_{L^q(\Omega)}\\
&\leq& \Vert v_k - v_0 \Vert_{L^q(B_R)} + \Vert \bar{u}_0 \circ T \circ T_k^{-1} - u_0 \Vert_{L^q(\Omega)} \rightarrow 0 .
\end{eqnarray*}
\end{proof}

A fact that follows from the proof and deserves to be highlighted is

\begin{cor} \label{C4}
Let $u_k$ be a sequence in $B_{\mathcal A}(\Omega)$ such that $u_k \rightarrow u_0$ strongly in $L^q(\Omega)$ for some $1 \leq q < \frac{n}{n-1}$. If $u_0 \neq 0$, then $u_k$ is bounded in $BV(\Omega)$.
\end{cor}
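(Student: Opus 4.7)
The plan is to argue by contradiction, retracing the dichotomy in the proof of Theorem \ref{T9}. Suppose $u_k$ is not bounded in $BV(\Omega)$; then, after passing to a subsequence, $|Du_k|(\Omega) \to \infty$. I would invoke Proposition \ref{P4} (Huang--Li) exactly as in the proof of Theorem \ref{T9} to produce $T_k \in SL(n)$ such that, setting $v_k = \bar{u}_k \circ T_k$, one has $|Dv_k|(\R^n) \leq {\cal E}_{\R^n}(\bar{u}_k)/d_0 \leq 1/d_0$, using $u_k \in B_{\mathcal A}(\Omega)$.

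The first key step is to force $|T_k| \to \infty$. Using the standard change-of-variables inequality $|D\bar{u}_k|(\R^n) \leq |T_k^{-1}|\,|Dv_k|(\R^n)$, which holds on $SL(n)$ because $|\det T_k| = 1$, together with \eqref{ebv} to obtain $|Du_k|(\Omega) \leq |D\bar{u}_k|(\R^n)$, one concludes $|T_k^{-1}| \geq d_0\,|Du_k|(\Omega) \to \infty$. The elementary fact that every $T \in SL(n)$ satisfies $|T^{-1}| \leq |T|^{n-1}$ (its singular values multiply to $1$, so the smallest is at least $|T|^{-(n-1)}$) then forces $|T_k| \to \infty$.

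With $|T_k| \to \infty$ in hand, I would quote verbatim the first branch of the proof of Theorem \ref{T9}: the set $X = \liminf T_k^{-1}(\Omega + T_k(y_k))$ has zero Lebesgue measure for every sequence $y_k$ in $\R^n$, so every translation $v_k(\cdot - y_k)$ converges locally weakly* to zero in $BV(\R^n)$, and Proposition \ref{P3} then yields $\bar{u}_k \to 0$ strongly in $L^{q'}(\R^n)$ for any $1 < q' < \tfrac{n}{n-1}$. Since the Sobolev--Zhang inequality \eqref{ASob} keeps $u_k$ bounded in $L^{n/(n-1)}(\Omega)$, a standard interpolation promotes this to $u_k \to 0$ in $L^q(\Omega)$ for every $1 \leq q < \tfrac{n}{n-1}$, contradicting $u_k \to u_0 \neq 0$ in $L^q(\Omega)$.

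The only mildly delicate point I anticipate is recording the change-of-variables inequality for $BV$ functions under $SL(n)$ transformations and the bound $|T^{-1}| \leq |T|^{n-1}$ on $SL(n)$; both are short, and everything else is a direct re-reading of the first case in the proof of Theorem \ref{T9}.
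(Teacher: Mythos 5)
Your proof is correct and takes essentially the same route as the paper, which obtains the corollary directly from the dichotomy ($|T_k|\to\infty$ versus $T_k\to T$) in the proof of Theorem \ref{T9}. The only addition is that you make explicit, via the change-of-variables bound $|D(v\circ S)|(\R^n)\le \Vert S\Vert\,|Dv|(\R^n)$ for $S\in SL(n)$ and the elementary estimate $\Vert T^{-1}\Vert\le\Vert T\Vert^{n-1}$, why unboundedness of $|Du_k|(\Omega)$ forces $|T_k|\to\infty$; this is precisely the step the paper leaves implicit, and your quantitative treatment of it is sound.
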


Theorems \ref{T1} and \ref{T3} can now be proved by using the previous developments.

\begin{proof}[Proof of Theorem \ref{T1}]
Let $u_k$ be a minimizing sequence of $\Phi_{\cal A}$ in $X$. By Hölder's inequality, $u_k$ is bounded in $L^1(\Omega)$ and, since $b \geq 0$ on $\partial \Omega$, the affine energy ${\cal E}_{\R^n} \bar{u}_k$ is also bounded. Therefore, by Theorem \ref{T9}, there exists $u_0 \in BV(\Omega)$ such that $u_k \rightarrow u_0$ strongly in $L^q(\Omega)$. Therefore, $u_0 \in X$ and, by Corollary \ref{C4}, $u_k$ is bounded in $BV(\Omega)$.

Passing to a subsequence, if necessary, one may assume that $u_k \rightharpoonup u_0$ weakly* in $BV(\Omega)$. Then, by Corollary \ref{C3}, we derive
\[
\Phi_{\cal A}(u_0) \leq \liminf_{k \rightarrow \infty} \Phi_{\cal A}(u_k) = c_{{\cal A}},
\]
and thus $u_0$ minimizes $\Phi_{\cal A}$ in $X$.

The same argument also works for a minimizing sequence $u_k$ of $\Phi_{\cal A}$ in $Y$. So, $u_k \rightharpoonup u_0$ weakly* in $BV(\Omega)$ and $u_k \rightarrow u_0$ strongly in $L^q(\Omega)$, module a subsequence, and thus $u_0 \in X$ and
\[
\Phi_{\cal A}(u_0) \leq \liminf_{k \rightarrow \infty} \Phi_{\cal A}(u_k) = d_{{\cal A}}.
\]
It remains to check that $u_0 \in Y$, which it follows readily from Theorem \ref{T9} applied to $L^r(\Omega)$ for $1 \leq r < \frac{n}{n-1}$.
\end{proof}

\begin{proof}[Proof of Theorem \ref{T3}]
Applying Proposition \ref{P2} with $b = 1$, we conclude that the space $BV_0(\Omega)$ is weakly* closed in $BV(\Omega)$. Then, the proof can be performed for the restriction of $\Phi_{\cal A}$ to $BV_0(\Omega)$ exactly as the previous one.
\end{proof}

\section{Critical constrained minimizations on $BV(\Omega)$}

We prove Theorems \ref{T2} and \ref{T4} by using Theorem \ref{T9}, Corollary \ref{C3}, Corollary \ref{C4} and the next result.

Consider the truncation for $h > 0$:
\[
T_h(s) = \min(\max(s,-h), h)\ \text{ and }\ R_h(s) = s - T_h(s).
\]
Proposition 2.3 of \cite{BW} ensures that $|D u|(\R^n) = |D T_h u|(\R^n) + |D R_h u|(\R^n)$ for every $u \in BV(\R^n)$. Unfortunately, such an identity is not valid within the affine setting, however, using Theorem \ref{T7} it is still possible to establish an inequality.

\begin{propo}\label{P5}
For any $u \in BV(\R^n)$,
\[
{\cal E}_{\R^n}(u) \geq {\cal E}_{\R^n}(T_h u) + {\cal E}_{\R^n}(R_h u).
\]
\end{propo}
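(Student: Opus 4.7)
The plan is to reduce the inequality to a pointwise (in $\xi$) additive decomposition of the direction-wise variation and then apply a reverse Minkowski inequality. For $v \in BV(\R^n)$ write
\[
g_v(\xi) := \int_{\R^n} |\sigma_v(x) \cdot \xi|\, d|Dv|(x),
\]
so that ${\cal E}_{\R^n}(v) = \alpha_n \bigl(\int_{\s^{n-1}} g_v(\xi)^{-n}\, d\xi\bigr)^{-1/n}$.

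The first step is to establish
\[
g_u(\xi) = g_{T_h u}(\xi) + g_{R_h u}(\xi) \quad \text{for every } \xi \in \s^{n-1}.
\]
I would argue this by slicing. For $y \in \xi^\perp$ set $u^y_\xi(t) := u(y + t\xi)$; by the standard $BV$ slicing formula, $u^y_\xi \in BV(\R)$ for ${\cal H}^{n-1}$-a.e. $y$ and $g_v(\xi) = \int_{\xi^\perp} |D v^y_\xi|(\R)\, d{\cal H}^{n-1}(y)$. In one dimension, $T_h$ is $1$-Lipschitz and $R_h = I - T_h$ is nondecreasing, from which the elementary identity $|Dw|(\R) = |DT_h w|(\R) + |DR_h w|(\R)$ follows for every $w \in BV(\R)$ (this is precisely the one-dimensional content of Proposition~2.3 of \cite{BW}). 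Since $T_h(u^y_\xi) = (T_h u)^y_\xi$ and $R_h(u^y_\xi) = (R_h u)^y_\xi$ pointwise in $t$, integrating the slice-wise identity over $y$ yields the claimed decomposition.

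Assume next that both ${\cal E}_{\R^n}(T_h u)$ and ${\cal E}_{\R^n}(R_h u)$ are strictly positive. By Theorem~\ref{T7} the functions $g_{T_h u}$ and $g_{R_h u}$ are strictly positive on $\s^{n-1}$, and Minkowski's inequality for the negative exponent $-n$ gives
\[
\left( \int_{\s^{n-1}} (g_{T_h u} + g_{R_h u})^{-n}\, d\xi \right)^{-1/n} \geq \left( \int_{\s^{n-1}} g_{T_h u}^{-n}\, d\xi \right)^{-1/n} + \left( \int_{\s^{n-1}} g_{R_h u}^{-n}\, d\xi \right)^{-1/n}.
\]
Multiplying by $\alpha_n$ produces the inequality. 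In the degenerate case where, say, ${\cal E}_{\R^n}(T_h u) = 0$, the target reduces to ${\cal E}_{\R^n}(u) \geq {\cal E}_{\R^n}(R_h u)$, which follows at once from the pointwise bound $g_u \geq g_{R_h u}$ furnished by Step~1.

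The main technical delicacy is the direction-wise decomposition. Although the total-variation identity in \cite{BW} is classical, extending it to each direction $\xi$ requires handling the absolutely continuous, Cantor and jump parts of $Du$ consistently under composition with $T_h$ and $R_h$; the slicing route is the cleanest since it reduces matters to the trivial one-dimensional monotone case. The role of Theorem~\ref{T7} is then purely to license the use of Minkowski's inequality at negative exponent by ruling out divergent integrands, so that the dichotomy between the positive and degenerate cases is clean.
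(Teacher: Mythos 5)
Your proof is correct, and its overall skeleton coincides with the paper's: an additive decomposition $\Psi_\xi(u)=\Psi_\xi(T_h u)+\Psi_\xi(R_h u)$ of the directional variation (your $g_v(\xi)$ is the paper's $\Psi_\xi(v)$), Theorem \ref{T7} to guarantee strict positivity of both summands, the reverse Minkowski inequality at exponent $-n$, and the degenerate case dispatched by monotonicity of the energy in $\Psi_\xi$. Where you genuinely diverge is in how the decomposition is justified. The paper proves it only for $u\in W^{1,1}(\R^n)$, where it is immediate because $\nabla T_h u$ and $\nabla R_h u$ are supported on the a.e.-disjoint sets $\{|u|\le h\}$ and $\{|u|>h\}$, and then transfers the final inequality to $BV(\R^n)$ using the density of $W^{1,1}(\R^n)$ in the strict topology together with Wang's strict continuity of ${\cal E}_{\R^n}$; note that this last step tacitly needs $T_h u_k\to T_h u$ and $R_h u_k\to R_h u$ strictly whenever $u_k\to u$ strictly, which requires a short additional argument (e.g.\ combining lower semicontinuity of each total variation with the identity $|Du_k|(\R^n)=|DT_h u_k|(\R^n)+|DR_h u_k|(\R^n)$ from \cite{BW}). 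Your slicing argument sidesteps the approximation entirely: the one-dimensional identity for a splitting of the identity into two nondecreasing $1$-Lipschitz maps is elementary (refine partitions and use that $T_h$ and $R_h$ preserve the sign of increments), and the $BV$ slicing theorem upgrades it to $g_u(\xi)=g_{T_h u}(\xi)+g_{R_h u}(\xi)$ for every $\xi$ directly on $BV(\R^n)$ — in fact a slightly stronger statement than the paper records. The price is the machinery of one-dimensional sections (good representatives on a.e.\ line, commutation of $T_h$ with restriction), all of which is standard. Both routes are sound; yours is arguably cleaner on the $BV$ side because it removes the density step and the continuity issues attached to it.
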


\begin{proof} We first prove the inequality for functions $u \in W^{1,1}(\R^n)$. From the definition of $T_h(s)$, we have $T_h u, R_h u \in W^{1,1}(\R^n)$ and $\Psi_\xi(u) = \Psi_\xi(T_h u) + \Psi_\xi(R_h u)$ for all $\xi \in \s^{n-1}$, where
\[
\Psi_\xi(u) = \int_{\R^n} | \nabla_\xi u(x) |\, dx.
\]
Note that this decomposition implies ${\cal E}_{\R^n}(u) \geq {\cal E}_{\R^n}(T_h u)$ and ${\cal E}_{\R^n}(u) \geq {\cal E}_{\R^n}(R_h u)$. Thus, the statement follows if ${\cal E}_{\R^n}(T_h u) = 0$ or ${\cal E}_{\R^n}(R_h u) = 0$.

Assuming that ${\cal E}_{\R^n}(T_h u)$ and ${\cal E}_{\R^n}(R_h u)$ are nonzero, by Theorem \ref{T7}, we have $\Psi_\xi(T_h u), \Psi_\xi(R_h u) > 0$ for all $\xi \in \s^{n-1}$. So, by the Minkowski's inequality for negative exponents, we get
\begin{eqnarray*}
{\cal E}_{\R^n}(u) &=& \alpha_n \left( \int_{\s_{n-1}} \left( \Psi_\xi(T_h u) + \Psi_\xi(R_h u) \right)^{-n} d\xi\right)^{-\frac 1n} \\
&\geq& \alpha_n \left( \int_{\s_{n-1}} \left( \Psi_\xi(T_h u) \right)^{-n} d\xi\right)^{-\frac 1n} + \alpha_n \left( \int_{\s_{n-1}} \left( \Psi_\xi(R_h u) \right)^{-n} d\xi\right)^{-\frac 1n} \\
&=& {\cal E}_{\R^n}(T_h u) + {\cal E}_{\R^n}(R_h u).
\end{eqnarray*}
Finally, the inequality extends to $BV(\R^n)$ by using both the density of $W^{1,1}(\R^n)$ in $BV(\R^n)$ and the continuity of $u \in BV(\R^n) \mapsto {\cal E}_{\R^n}(u)$ with respect to the strict topology.
\end{proof}

\begin{proof}[Proof of Theorems \ref{T2} and \ref{T4}] Thanks to the weak* closure of $BV_0(\Omega)$ in $BV(\Omega)$, it is enough to just prove Theorem \ref{T2}.

Let $u_k$ be a minimizing sequence of $\Phi_{\cal A}$ in $X$. Proceeding as in the proof of Theorem \ref{T1}, by Theorem \ref{T9}, we have $u_k \rightarrow u_0$ strongly in $L^1(\Omega)$, module a subsequence. One may also assume that $u_k \rightarrow u_0$ almost everywhere in $\Omega$ and $T_h u_k \rightharpoonup T_h u_0$ weakly in $L^{\frac{n}{n-1}}(\Omega)$.

Using the Sobolev-Zhang inequality on $BV(\R^n)$,
\[
n\omega_n^{1/n} \left( \int_{\R^n} |u|^{\frac{n}{n-1}}\, dx\right)^{\frac{n-1}{n}} \leq {\cal E}_{\R^n}(u),
\]
and that $b$ is nonnegative, we derive
\begin{eqnarray*}
c_{\cal A} &=& \lim_{k \rightarrow \infty}\left( {\cal E}_{\R^n}(\bar{u}_k) + \int_{\Omega} a |u_k|\, dx + \int_{\partial \Omega} b |\tilde{u}_k|\, d{\cal H}^{n-1}\right)\\
&\geq & n\omega_n^{1/n}+\int_{\Omega} a |u_{0}|\, dx,
\end{eqnarray*}
so the condition $c_{\cal A} < n\omega_n^{1/n}$ implies that $u_0 \neq 0$. Hence, by Corollaries \ref{C3} and \ref{C4}, we have $u_k \rightharpoonup u_0$ weakly* in $BV(\Omega)$ and $\Phi_{\cal A}(u_0) \leq c_{\cal A}$. It only remains to show that $u_0 \in X$.

By Proposition \ref{P5}, we easily deduce that
\begin{eqnarray*}
c_{\cal A} &=& \lim_{k \rightarrow \infty} \Phi_{\cal A}(u_k) \\
&\geq & \lim_{k \rightarrow \infty} \left( \Phi_{\cal A}(T_h u_k) + \Phi_{\cal A}(R_h u_k) \right)\\
&\geq & c_{\cal A}\lim_{k \rightarrow \infty}\left( \Vert T_h u_k\Vert_{L^{\frac{n}{n-1}}(\Omega)} + \Vert R_h u_k\Vert_{L^{\frac{n}{n-1}}(\Omega)} \right).
\end{eqnarray*}
Applying Lemma 3.1 of \cite{BW}, we have
\begin{eqnarray*}
c_{\cal A} &\geq & c_{\cal A}\left[ \Vert T_h u_0 \Vert_{\frac{n}{n-1}} + \left( 1 + \Vert R_h u_0 \Vert_{\frac{n}{n-1}}^{\frac{n}{n-1}} - \Vert u_0 \Vert_{\frac{n}{n-1}}^{\frac{n}{n-1}} \right)^{\frac{n-1}{n}} \right].
\end{eqnarray*}
Using the condition $c_{\cal A} > 0$ and letting $h \rightarrow \infty$, one obtains
\[
1\geq \left( \Vert u_0 \Vert_{\frac{n}{n-1}}^{\frac{n}{n-1}} \right)^{\frac{n-1}{n}} + \left( 1 - \Vert u_0 \Vert_{\frac{n}{n-1}}^{\frac{n}{n-1}}\right)^{\frac{n-1}{n}},
\]
and thus $u_0 \in X$ because $u_0 \neq 0$.

If the minimizing sequence $u_k$ of $\Phi_{\cal A}$ is taken in $Y$, the same strategy of proof produces $u_k \rightarrow u_0$ almost everywhere in $\Omega$, $u_0 \in X$ and $\Phi_{\cal A}(u_0) \leq d_{\cal A}$. On the other hand, the first two properties along with Brezis-Lieb Lemma imply that $u_k \rightarrow u_0$ strongly in $L^{\frac{n}{n-1}}(\Omega)$. Finally, since $1 \leq r \leq \frac{n}{n-1}$, it follows that $u_0 \in Y$.
\end{proof}

\n {\bf Acknowledgments:} The first author was partially supported by Fapemig (Universal APQ 00709-18) and the second author was partially supported by CNPq (PQ 302670/2019-0 and Universal 429870/2018-3) and Fapemig (PPM 00561-18).

\end{document}